\theoremstyle{plain}
\newtheorem{theorem}{Theorem}
\newtheorem{corollary}{Corollary}
\newtheorem{proposition}{Proposition}
\newtheorem{lemma}{Lemma}
\theoremstyle{definition}
\newtheorem{remark}{Remark}
\newtheorem{question}{Question}
\begin{document}

\title[gonality sequence]
{On the gonality sequence of smooth curves: normalizations of singular curves in a quadric surface}
\author{E. Ballico}
\address{Dept. of Mathematics\\
 University of Trento\\
38123 Povo (TN), Italy}
\email{ballico@science.unitn.it}
\thanks{The author was partially supported by MIUR and GNSAGA of INdAM (Italy).}
\subjclass{14H45; 14H50; 32L10}
\keywords{gonality sequence; smooth curve; nodal curve; quadric surface}

\begin{abstract}
Let $C$ be a smooth curve of genus $g$. For each positive integer $r$ the $r$-gonality
$d_r(C)$ of $C$ is the minimal integer $t$ such that there is $L\in \mbox{Pic}^t(C)$ with
$h^0(C,L) =r+1$. In this paper for all $g\ge 40805$ we construct several examples of smooth curves $C$ of genus $g$
with $d_3(C)/3< d_4(C)/4$, i.e. for which a slope inequality fails.
\end{abstract}

\maketitle

\section{Introduction}\label{S1}
Let $C$ be a smooth and connected projective curve of genus $g\ge 3$. For each integer $r\ge 1$
the $r$-gonality $d_r(C)$ of $C$ is the minimal integer $d$ such that there is a degree $d$
line bundle $L$ on $C$ with $h^0(C,L) \ge r+1$ (\cite{lm}). The sequence $\{d_r(C)\}_{r\ge 1}$
is called the {\it gonality sequence} of $C$. This sequence is important to understand the Brill-Noether
theory of vector bundles on $C$ (\cite{ln1}, \cite{ln2}, \cite{ms}). See \cite{lm}, \S 3, for
general properties of this sequence for an arbitrary curve $C$. For most curves we
have
\begin{equation}\label{eqa1}
\frac{d_r(C)}{r}  \ge \frac{d_{r+1}(C)}{r+1}
\end{equation}
for all $r\ge 2$ (\cite{lm}, Proposition 4.1).
In \cite{lm} H. Lange and G. Martens introduced the following notion. The curve $C$ is said to satisfy
the {\it slope inequality} if (\ref{eqa1}) is satisfied for all $r\ge 2$. Since $d_2(C) \le 2d_1(C)$
for all $C$, the slope inequality is always satisfied for $r=1$. Hence $C$ does not satisfy the slope inequality if and only if there
is at least one integer $r \ge 2$ for which (\ref{eqa1}) fails. Many different examples of such curves
are constructed in \cite{lm}. In this paper we look at the case $r=3$ of (\ref{eqa1}) and prove the following result.

\begin{theorem}\label{i1}
Fix an integer $g\ge 40805$. Then there exists a smooth curve $C$ of genus $g$ such that $d_3(C)/3< d_4(C)/4$.
\end{theorem}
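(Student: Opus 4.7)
The plan is to realize $C$ as the normalization of an irreducible nodal curve $X\subset Q=\mathbb{P}^1\times\mathbb{P}^1$ of bidegree $(a,b)$ with $\delta$ ordinary nodes in general position, with $(a,b,\delta)$ chosen so that $g=(a-1)(b-1)-\delta$ and the triple satisfies certain numerical constraints (roughly $a\le b<2a$) that will force the slope inequality to fail. The two rulings of $Q$ pull back to basepoint-free pencils on $C$ of degrees $b$ and $a$, and, since $h^0(Q,O_Q(1,1))=4$ with the restriction $H^0(Q,O_Q(1,1))\to H^0(X,O_X(1,1))$ injective (its kernel $H^0(Q,O_Q(1-a,1-b))$ vanishes for $a,b\ge 2$), pulling the resulting series back along $\nu\colon C\to X$ gives a line bundle of degree $a+b$ with at least $4$ sections. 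Hence $d_3(C)\le a+b$, and with a generic choice of nodes one expects equality.

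The main obstacle is the lower bound $d_4(C)>\tfrac{4}{3}(a+b)$. One part is direct: every line bundle on $C$ coming from $|O_Q(\alpha,\beta)|$ with $(\alpha+1)(\beta+1)\ge 5$ has degree at least $\min(4a,\,2a+b,\,a+2b)$, which under the assumption $a\le b<2a$ equals $2a+b$; and $2a+b>\tfrac{4}{3}(a+b)$ is equivalent to $b<2a$. The hard part is to exclude every $g^4_d$ on $C$ that does \emph{not} come from $Q$ in this way. I would attack this via a specialization/deformation argument inside the Severi variety of nodal bidegree $(a,b)$ curves on $Q$: either degenerate $X$ to a reducible union of rulings with prescribed nodes and apply upper semicontinuity of $d_4$, or run a Brill--Noether dimension count on an appropriate Hurwitz scheme to show that the sublocus of the Severi variety parametrizing curves with a small-degree $g^4$ has positive codimension. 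Either route requires carefully tracking linear series through the degeneration, or through generic perturbation of the nodes, and this is where the real work lies.

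To cover every $g\ge 40805$, one combines several admissible families. Fixing $a$ and letting $b\in\{a,\dots,2a-1\}$ and $\delta\in\{0,\dots,(a-1)(b-1)\}$ vary produces blocks of consecutive genera; combining several values of $a$, each large enough for the node-genericity arguments to succeed, should exhaust all sufficiently large $g$. The explicit threshold $g\ge 40805$ is expected to emerge as the smallest integer from which the union of these accessible genera equals all of $\mathbb{Z}_{\ge g}$, subject to the numerical constraints imposed by the lower bound argument on $d_4(C)$.
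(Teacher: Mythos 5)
Your construction is the same as the paper's (normalizations of nodal curves on a smooth quadric with nodes in general position, $d_3\le a+b$ via the pullback of $\mathcal O_Q(1,1)$), and your genus-coverage strategy is essentially the paper's as well (the paper takes $b=a$ and $0\le\delta\le 2a-4$, so that the genera $(a-1)^2-\delta$ fill every integer in $(g_{a-1},g_a]$, and $40805=202^2+1$ is just the first genus for which the required $a\ge 204$ is available). But the proof has a genuine gap at exactly the point you flag as ``where the real work lies'': you never actually establish the lower bound $d_4(C)>\tfrac43(a+b)$. Bounding the degrees of series induced from $|\mathcal O_Q(\alpha,\beta)|$ is not even the full story for series coming from $Q$ (one can twist down by fibers over the nodes, cf.\ the degree-$(3a-2)$ series of Remark~\ref{h3}), and the essential task is to exclude \emph{every} abstract $g^4_z$ with $z$ small. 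Neither of your two proposed routes is carried out, and both face serious obstacles: a degeneration to unions of rulings leaves the special fiber a non-compact-type reducible curve where limit linear series are not controlled, and semicontinuity runs the wrong way for concluding anything about the \emph{general} member from the special one; a Brill--Noether dimension count on a Hurwitz or Severi stratum would at best show the locus of curves carrying an unexpected $g^4_z$ is not dense, which still requires an a priori bound on that locus that is precisely the content of the claim.

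The paper closes this gap by a postulation argument that is absent from your proposal: if $L$ evinces $d_4(C)=z$ and $A\in|L|$ is general with image $B=u(A)\subset Q$, then Riemann--Roch, Serre duality and adjunction ($\omega_Y\cong\mathcal O_Y(a-2,a+m-2)$) force $h^1(Q,\mathcal I_{S\cup B}(a-2,a-2))>0$, where $S$ is the node set. The combinatorial Lemmas~\ref{e2.0}, \ref{e4} and \ref{g4} (peeling off curves of types $(2,1)$ and $(1,2)$) show that a configuration $S\cup B$ in sufficiently general position with $\sharp(B)\le 3a-15$ has $h^1=0$, and Claim~2 and Step~($\diamond$) of Proposition~\ref{g5} verify the needed position hypotheses (no line or $(1,1)$-curve contains too many points of $B$) using that $L$ is base-point free and monodromy of hyperplane sections. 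This yields $d_4(C)\ge 3a-15$ against $d_3(C)\le 2a$, whence $d_4/4>d_3/3$ for $a\ge 204$. Without some substitute for this cohomological step, your argument does not prove the theorem.
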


The curves $C$ used to prove Theorem \ref{i1} are the normalization of nodal curves $Y$ contained
in a smooth quadric surface $Q \subset \mathbb {P}^3$. These
families of examples are an extension of \cite{lm}, Example 4.12. We prove
that for the normalization of many of them the rational number
$d_4(C)/4 -d_3(C)/3$ is rather large (Propositions \ref{g5}, \ref{h1} and Corollary \ref{h2}). As an obvious consequence we get
the following statement.

\begin{theorem}\label{i2}
There is a sequence $\{C_g\}_{g\ge 3}$ of smooth curves such that $C_g$ has genus $g$,
$$\lim _{g\to \infty} \frac{d_4(C_g)}{d_3(C_g)}
= 3/2 \ \mbox{and} \ \lim _{g\to \infty} \frac{d_4(C_g)/4 -d_3(C_g)/3}{\sqrt{g}} = 1/12.$$
\end{theorem}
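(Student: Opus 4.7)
The plan is to realize the sequence, at least for large $g$, by the construction underlying Theorem~\ref{i1}: I let $C_g$ be the normalization of a general nodal curve $Y_g\subset Q$ of some bidegree $(a,b)$ with $\delta$ nodes and $(a-1)(b-1)-\delta=g$. For the finitely many $g$ below the threshold of that theorem, any smooth curve of genus $g$ may be taken as $C_g$, since the two asserted limits are insensitive to finitely many terms of the sequence.

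For $g$ large I would choose $(a,b,\delta)$ so as to make both $a$ and $b$ as close to $\sqrt g$ as possible. Writing $g=m^2+k$ with $0\le k\le 2m$, set
\[
(a,b,\delta)=
\begin{cases}
(m+1,\,m+1,\,0) & \text{if } k=0,\\
(m+1,\,m+2,\,m-k) & \text{if } 1\le k\le m,\\
(m+2,\,m+2,\,2m+1-k) & \text{if } m+1\le k\le 2m.
\end{cases}
\]
Then $(a-1)(b-1)-\delta=g$ in all three cases, $a,b=\sqrt g+O(1)$, and $\delta=O(\sqrt g)$. The line bundle $\mathcal{O}_Q(1,1)|_{Y_g}$ pulls back to $C_g$ as a $g^3$ of degree $a+b$, while $\mathcal{O}_Q(1,2)|_{Y_g}$ (or $\mathcal{O}_Q(2,1)|_{Y_g}$) furnishes a $g^5$ of degree $\min(2a+b,a+2b)$; hence
\[
d_3(C_g)\le a+b,\qquad d_4(C_g)\le \min(2a+b,\,a+2b).
\]
The matching lower bounds
\[
d_3(C_g)\ge a+b-o(\sqrt g),\qquad d_4(C_g)\ge \min(2a+b,\,a+2b)-o(\sqrt g)
\]
are the quantitative content of Propositions~\ref{g5}, \ref{h1} and Corollary~\ref{h2}: no $g^3$ or $g^4$ of substantially smaller degree is carried by such a normalization.

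Granting these four bounds and using $a+b=2\sqrt g+O(1)$ together with $\min(2a+b,\,a+2b)=3\sqrt g+O(1)$, both limits drop out by direct computation:
\[
\frac{d_4(C_g)}{d_3(C_g)}=\frac{3\sqrt g+o(\sqrt g)}{2\sqrt g+o(\sqrt g)}\longrightarrow \frac{3}{2},
\]
\[
\frac{d_4(C_g)/4-d_3(C_g)/3}{\sqrt g}=\frac{(3\sqrt g/4-2\sqrt g/3)+o(\sqrt g)}{\sqrt g}\longrightarrow \frac{1}{12}.
\]
The main obstacle is not this asymptotic packaging but the sharpness of the underlying lower bounds: to recover the exact values $3/2$ and $1/12$ (rather than $\liminf$s and $\limsup$s in intervals around them) one must read the cited propositions as giving $d_3$ and $d_4$ up to an additive error that is $o(\sqrt g)$ rather than merely $O(\sqrt g)$, and this is what their quantitative formulation is designed to ensure.
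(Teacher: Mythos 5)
Your asymptotic packaging is exactly the paper's: realize $C_g$ for large $g$ as the normalization of a nodal curve in $Q$ whose bidegree entries are both $\sqrt g+O(1)$, get $d_3=2\sqrt g+o(\sqrt g)$ and $d_4=3\sqrt g+o(\sqrt g)$ from upper bounds via pullbacks of $\mathcal O_Q(1,1)$ and $\mathcal O_Q(2,1)$ together with matching lower bounds, and take arbitrary smooth curves for the finitely many small genera. The final computation of the two limits is correct, and you are right that an $o(\sqrt g)$ additive error suffices (the paper in fact gets $O(1)$: $2a-5\le d_3\le 2a$ and $3a-15\le d_4\le 3a-1$).

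However, there is a genuine gap in your middle case. For $g=m^2+k$ with $1\le k\le m$ you take bidegree $(m+1,m+2)$ with $\delta=m-k$ nodes, and you assert that the lower bounds $d_3\ge a+b-o(\sqrt g)$, $d_4\ge \min(2a+b,a+2b)-o(\sqrt g)$ are ``the quantitative content of Propositions \ref{g5}, \ref{h1} and Corollary \ref{h2}.'' They are not, for this case. Proposition \ref{g5} does treat bidegree $(a,a+m)$, but only under the hypothesis $x\le a/3+m$ on the number of nodes; with $a=m+1$ and $m=1$ this caps $\delta$ at roughly $\sqrt g/3$, whereas your $\delta=m-k$ can be as large as $m-1\approx\sqrt g$. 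Proposition \ref{h1}, Lemma \ref{h4} and Corollary \ref{h2} (the only places where many nodes are allowed, and the only place where a lower bound on $d_3$ is proved at all) are stated exclusively for bidegree $(a,a)$. So for $1\le k$ well below $2m/3$ none of the cited results gives you either lower bound, and the two limits are not established for those genera. The fix is simply to drop the unbalanced bidegree: every $g$ with $(a-2)^2<g\le(a-1)^2$ is the genus of the normalization of a general nodal $Y\in\vert\mathcal I_{2S}(a,a)\vert$ with $x=(a-1)^2-g\le 2a-4$ nodes, which is precisely the range covered by Corollary \ref{h2} (for $a\ge 204$, i.e.\ $g\ge 40805$); your case $1\le k\le m$ is then absorbed into your third case with $a=m+2$ and $x=2m+1-k$. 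This is what the paper does.
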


To prove Theorems \ref{i1} and \ref{i2} we need to study the cohomology of certain finite
subsets $S\cup B$ of the smooth quadric surface $Q$. These preliminary lemmas are proved in section \ref{S2}. In section \ref{S3} we use these lemmas in the following way.
Fix an integral nodal curve $Y\in \vert \mathcal {O}_Q(a,a+m)\vert$ and set $S:= \mbox{Sing}(Y)$.
Let $C$ be the normalization of $Y$. Fix any $L\in \mbox{Pic}^z(C)$ evincing $d_4(C)$.
To a general divisor $A\in \vert L\vert$ we associate a set $B\subset Q\setminus S$
such that $\sharp (B)=z$ and $h^1(Q,\mathcal {I}_{S\cup B}(a-2,a+m-2)) >0$. The lemmas
proved in section \ref{S2} show that $z\ge 3a-15$ for a  general $S$ and $m$ not too large, while obviously $d_3(C) \le 2a$. Taking only smooth curves
inside $Q$ we only get a sequence of genera, enough to prove the weaker form of Theorem \ref{i2} with ``~$\limsup$~'' instead of ``~$\lim$~'' (as implicit
in \cite{lm}, Example 4.12).

For all integers $r\ge 2$ and $g\ge 2$ let $\alpha (r,g)$ be the supremum of all rational numbers $d_{r+1}(C)/d_r(C)$ with $C$ a smooth curve of genus $g$.
\begin{question}\label{iq1}
Compute $\alpha ' (r):= \liminf _{g\to \infty } \alpha (r,g)$ and $\alpha ''(r):= \limsup _{g\to \infty} \alpha (r,g)$. Is $\alpha ''(3) = 3/2$~?
\end{question}

We work over an algebraically closed base field with characteristic zero.

I want to thank the referee for several extremely useful remarks (in this version Step ($\diamond$) of the proof of Proposition \ref{g5} is due to the referee).

\section{Preliminaries}\label{S2}
Let $Q\subset \mathbb {P}^3$ be a smooth quadric surface.  For any coherent sheaf $\mathcal {F}$
on $Q$ and any $i\in \mathbb {N}$ set $H^i(\mathcal {F}):= H^i(Q,\mathcal {F})$
and $h^i(\mathcal {F}):= \dim (H^i(\mathcal {F}))$. For all $(a,b)\in  \mathbb {Z}^{2}$
let $\mathcal {O}_Q(a,b)$ denote the line bundle on $Q$ with bidegree $(a,b)$.
We have $h^0(\mathcal {O}_Q(a,b))=(a+1)(b+1)$ and $h^1(\mathcal {O}_Q(a,b))=0$ if $(a,b)\in \mathbb {N}^2$,
while $h^0(\mathcal {O}_Q(a,b)) =0$ if either $a<0$ or $b<0$. If $(a,b)\in \mathbb {N}^2$
and $T\in \vert \mathcal {O}_Q(a,b)\vert$, then we say that $T$ has type $(a,b)$.
The lines contained in $Q$ are the curves $D\subset Q$ with either type $(1,0)$ or
type $(0,1)$.

\begin{remark}\label{g5}
Fix a integer $x>0$ and a general $S\subset Q$ such that $\sharp (S)=x$. Since
$h^0(\mathcal {O}_Q(1,1)) =4$, $h^0(\mathcal {O}_Q(2,1)) =h^0(\mathcal {O}_Q(1,2))=6$ and $S$ is general, we have $\sharp (S\cap T_1)\le 3$ for
every $T_1\in \vert \mathcal {O}_Q(1,1)\vert$, $\sharp (S\cap T_2)\le 5$ for every $T_2\in \vert \mathcal {O}_Q(2,1)\vert$ and $\sharp (S\cap T_3)\le 5$ for every $T_3\in \vert \mathcal {O}_Q(1,2)\vert$.
\end{remark}

\begin{lemma}\label{e2.0}
Fix integers $u>0$ and $v >0$.
Fix a reduced $D\in \vert \mathcal {O}_Q(2,1)\vert$ and a set $S\subset D$
such that $\sharp (S) \le 2v+u+1$, $\sharp (S\cap T) \le 1$ for every line $T\subset D$ (if any)
and $\sharp (S\cap T) \le u+v+1$ for every component $T$ of type $(1,1)$ of $D$ (if any).
Then $h^1(D,\mathcal {I}_{S,D}(u,v))=0$.
\end{lemma}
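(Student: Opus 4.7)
The plan is to argue by case analysis on the reducible structure of the reduced divisor $D$. Since $D$ has bidegree $(2,1)$ and is reduced, the only possibilities for the decomposition into irreducible components (up to swapping rulings) are: \emph{(i)} $D$ is irreducible, hence smooth rational; \emph{(ii)} $D = R \cup L$ with $R$ an irreducible $(1,1)$-curve (so $R \cong \mathbb{P}^1$) and $L$ a line of type $(1,0)$ meeting $R$ transversally at a single point $p$; \emph{(iii)} $D = L_1 \cup L_2 \cup M$ with $L_1, L_2$ two distinct (hence disjoint) $(1,0)$-lines and $M$ a $(0,1)$-line meeting $L_i$ at one point $p_i$ for $i=1,2$.

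In Case (i), $D \cong \mathbb{P}^1$ and $\mathcal{O}_D(u,v)$ has degree $u + 2v$, so $\mathcal{I}_{S,D}(u,v)$ is a line bundle on $\mathbb{P}^1$ of degree $u + 2v - \sharp(S) \ge -1$ by the global bound $\sharp(S) \le 2v + u + 1$; hence $h^1 = 0$. In Case (ii), I would apply the Mayer-Vietoris type short exact sequence
$$0 \to \mathcal{I}_{S\cap L,\,L}(u,v)(-\epsilon p) \to \mathcal{I}_{S,D}(u,v) \to \mathcal{I}_{S\cap R,\,R}(u,v) \to 0,$$
which is obtained as the kernel-cokernel sequence for restriction to $R$; here $\epsilon = 1$ if $p \notin S$ and $\epsilon = 0$ if $p \in S$, as a local computation at the node (working in $k[x,y]/(xy)$) shows. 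The degree on $L \cong \mathbb{P}^1$ of the leftmost term is $v - \sharp(S \cap L) - \epsilon \ge -1$ since $\sharp(S \cap L) \le 1$ and $v \ge 1$, while the degree on $R \cong \mathbb{P}^1$ of the rightmost term is $u + v - \sharp(S \cap R) \ge -1$ by the hypothesis $\sharp(S \cap R) \le u+v+1$ on the $(1,1)$-component. Both flanking $H^1$'s vanish, forcing the middle $H^1$ to vanish as well.

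Case (iii) is handled by the same technique applied twice: first split off $L_1$ at the node $p_1$, reducing the vanishing to an analogous one on $L_2 \cup M$, and then split $L_2 \cup M$ at $p_2$ to reduce to the two components $L_2$ and $M$ separately. At each step, the residual line bundles on the $\mathbb{P}^1$-components have degrees $v - \sharp(S \cap L_i) - \epsilon_i$ on $L_i$ and $u - \sharp(S \cap M) - \epsilon$ on $M$, all of which stay $\ge -1$ since $\sharp(S \cap T) \le 1$ for each line and $u, v \ge 1$. The main thing to be careful about throughout the argument is the local behavior of $\mathcal{I}_{S,D}$ at nodes of $D$ lying in $S$ (that is, the correct choice of $\epsilon$), but in every case the hypotheses leave enough slack for the degree-$\ge -1$ estimate on $\mathbb{P}^1$ to hold, so all the relevant $H^1$'s vanish.
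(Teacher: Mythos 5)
Your proof is correct and follows essentially the same route as the paper: a case analysis on the component structure of the reduced $(2,1)$-curve, combined with residual (Mayer--Vietoris type) exact sequences and the observation that a line bundle of degree $\ge -1$ on $\mathbb{P}^1$ has vanishing $h^1$. The only cosmetic difference is that you account for the node via the twist by $-\epsilon p$ and a local computation in $k[x,y]/(xy)$, whereas the paper equivalently absorbs the node into the residual subscheme on the line component; the degree estimates agree in both formulations.
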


\begin{proof}
First assume that $D$ is irreducible. Since $D \cong \mathbb {P}^1$ and $\deg (\mathcal {O}_D(u,v)) = 2v+u \ge \sharp (S)-1$, we have $h^1(D,\mathcal {I}_{S,D}(u,v))=0$. Now assume
that $D$ has an irreducible component $A$ of type $(1,1)$ and write $D = A\cup T$
with $T$ a line of type $(1,0)$. Since $\sharp (A\cap S) \le u+v+1$, $A\cong \mathbb {P}^1$ and
$\deg (\mathcal {O}_A(u,v)) = u+v$, we have $h^1(A,\mathcal {I}_{S\cap A,A}(u,v))=0$.
Since $\sharp (T\cap S)\le 1$ and $S\subset D$, we have $\sharp (S\setminus S\cap A) \le 1$.
Since $\deg (T\cap A)=1$, we also have $h^1(T,\mathcal {I}_{(S\setminus S\cap A)\cup (A\cap T),T}(u,v))=0$.
Hence a Mayer-Vietoris exact sequence gives  $h^1(D,\mathcal {I}_{S,D}(u,v))=0$. Now assume that $D$ is the union of $3$ lines
$T_1,T_2,T_3$ with $T_2$ of type $(0,1)$. Since $\sharp (T_i\cap S)\le 1$ for all $i$,
we have $h^1(T_1,\mathcal {I}_{S\cap T_1}(u,v))=0$, $h^1(T_2,\mathcal {I}_{S\cap T_2\setminus
S\cap T_1\cap T_2,T_2}(u-1,v))=0$ and $h^1(T_3,\mathcal {I}_{S\setminus (T_1\cup T_2)\cap S,T_3}(u-1,v-1))=0$. We use two Mayer-Vietoris exact sequences and get first $h^1(T_1\cup T_2,\mathcal {I}_{S\cap (T_1\cup T_2),T_1\cup T_2}(u,v)) =0$ and then $h^1(D,\mathcal {I}_{S,D}(u,v))=0$.
\end{proof}

\begin{lemma}\label{e4}
Fix integers $v \ge u\ge 9$ and set $\alpha := \lfloor u/3\rfloor$. Fix a finite set
$E\subset Q$ such that $\sharp (E) \le v-u +10\alpha  $, no $2$ of the points
of $E$ are contained in a line of $Q$, at most $2u+1$ of the points of $E$ are contained in a curve of type $(1,1)$, at
most $3u+1$ of the points of $E$ are contained in a curve of type $(2,1)$  and at most $3u-4$ of the points of $E$ are
contained in a curve of type $(1,2)$.  Then $h^1(\mathcal {I}_E(u,v)) =0$.
\end{lemma}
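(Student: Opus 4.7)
The plan is to apply the Castelnuovo--Mumford Horace method: induct on $u$, residuating against a divisor $D \in |\mathcal{O}_Q(2,1)|$ (or, by symmetry, $|\mathcal{O}_Q(1,2)|$) and using the standard residual exact sequence
$$0 \to \mathcal{I}_{E'}(u-2,\,v-1) \to \mathcal{I}_E(u,v) \to \mathcal{I}_{E \cap D, D}(u,v) \to 0,$$
where $E' := E \setminus (E \cap D)$. The desired vanishing $h^1(\mathcal{I}_E(u,v)) = 0$ then follows from $h^1$-vanishing of the two flanking terms.

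The restriction term should fall to Lemma \ref{e2.0} directly. The hypothesis ``no two of $E$ on a line of $Q$'' forces $\sharp(E \cap T) \le 1$ for every line component $T \subset D$; using $v \ge u$, the global bound $\sharp(E \cap A) \le 2u+1$ on $(1,1)$-curves gives $\sharp(E \cap A) \le u+v+1$ for any $(1,1)$-component $A \subset D$; and $\sharp(E \cap D) \le 3u+1 \le 2v+u+1$ matches the main cap of Lemma \ref{e2.0}. Hence $h^1(D, \mathcal{I}_{E \cap D, D}(u,v)) = 0$ for any reduced $D \in |\mathcal{O}_Q(2,1)|$ we end up choosing.

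For the residual term the inductive hypothesis on the new pair $(u-2,\,v-1)$ requires $\sharp(E') \le (v-1)-(u-2)+10\lfloor (u-2)/3 \rfloor$. Since $\lfloor (u-2)/3 \rfloor \ge \alpha - 1$, this reduces in the worst case ($u \not\equiv 2 \pmod 3$) to the condition $\sharp(E \cap D) \ge 9$, and is automatic otherwise. The main obstacle is therefore the \emph{selection} of $D$: one must produce $D \in |\mathcal{O}_Q(2,1)|$ meeting $E$ in at least nine points, while simultaneously arranging that $E'$ inherits the hypotheses on $(1,1)$, $(2,1)$ and $(1,2)$ curves for the tighter parameter $u-2$ (the local caps drop by $2$ each). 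This forces any ``overfull'' low-degree auxiliary curve to have several of its points absorbed into $D$. Because $h^0(\mathcal{O}_Q(2,1)) = 6$, five points of $E$ in general position already determine a unique $(2,1)$-curve; a careful choice of those five points---supplemented by a pigeonhole/counting argument exploiting the size of $\sharp(E)$ relative to $\dim |\mathcal{O}_Q(2,1)|$---should yield the required $D$, with the residuation alternating between $(2,1)$ and $(1,2)$ as needed to preserve $v \ge u$ at each step.

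The base case can be taken at $u \in \{9,10,11\}$: here $\alpha = 3$ and the bound $\sharp(E) \le v - u + 30$ is small enough to dispatch by repeated residuation against lines $T$ of type $(1,0)$ or $(0,1)$, using the ``no two on a line'' hypothesis to ensure $\sharp(E \cap T) \le 1$ and reducing after finitely many steps to a trivial vanishing $h^1(\mathcal{O}_Q(-1,\ast)) = 0$.
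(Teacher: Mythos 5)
Your overall strategy (Horace method: residuate against curves of types $(2,1)$ and $(1,2)$, control the trace via Lemma \ref{e2.0}, and recurse) is the same as the paper's, but the induction as you have set it up cannot get off the ground. In the worst case ($u\not\equiv 2 \pmod 3$) your single residuation step requires a curve $D\in\vert\mathcal{O}_Q(2,1)\vert$ with $\sharp(E\cap D)\ge 9$, and no pigeonhole argument can produce one: since $h^0(\mathcal{O}_Q(2,1))=6$, a set $E$ in general position (which is allowed by the hypotheses, e.g.\ $10\alpha$ generic points for $u=v$) meets every $(2,1)$-curve in at most $5$ points. The ``required $D$'' simply need not exist. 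The paper's way around this is to make each round of the recursion a \emph{pair} of residuations, one by a $(2,1)$-curve $A_i$ and one by a $(1,2)$-curve $D_i$, each chosen to contain the \emph{maximal} number of remaining points. Maximality plus $h^0=6$ gives the dichotomy: either the chosen curve absorbs at least $5$ points, or at most $4$ points remain and they all lie on such a curve, so the residual set is already empty. Thus each round removes at least $10$ points while the bidegree drops by $(3,3)$, and after $\alpha=\lfloor u/3\rfloor$ rounds the set of $\le 10\alpha$ points is exhausted. This ``$\ge 5$ per curve, $10$ per round'' accounting is the missing idea; $9$ on a single curve is the wrong target.

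A second, independent gap is your treatment of the hypotheses for the residual set. You note that the local caps on $(1,1)$, $(2,1)$ and $(1,2)$ curves must drop by $2$ (resp.\ $3$) at each step and that this must be ``arranged,'' but you give no mechanism. The paper does not propagate the hypotheses inductively at all: instead it proves directly (Claims 1--4) that at stage $i$ the chosen curves satisfy $a_i\le 3u-9i+10$, $b_i\le 3u-9i+5$, etc., by a global count --- if some stage-$i$ curve contained too many points, then since the sequences $a_i,b_i$ are non-increasing and each earlier curve absorbed at least $5$ points, $\sharp(E)$ would exceed $v-u+10\alpha$ (this is the role of the quadratic functions $\phi,\psi,\tau,\eta$). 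Without such an argument Lemma \ref{e2.0} cannot be applied to the traces beyond the first step. Finally, your proposed base case also does not close: for $u=9$ you may have up to $v+21$ points, but residuating one point at a time against lines only disposes of about $u+v+2$ points before both twists become negative. None of these are cosmetic issues; as written the proof does not go through.
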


\begin{proof}
Notice that $\alpha \ge3$.  Set $\beta := u-3\alpha$

\quad (i) In this step we assume $v=u$.
Set $E_0:=E$.
Take any
$A_1\in
\vert
\mathcal {O}_Q(2,1)\vert$ such that
$a_1:= \sharp (E_0\cap A_1)$ is maximal. Set $F_1:= A_1\cap E_0$ and $E_{1,0}:= E_0\setminus F_1$.
Let $D_1\in \vert \mathcal \mathcal {O}_Q(1,2)\vert$ be a curve such that $b_1:= \sharp (E_{1,0}\cap D_1)$ is maximal. Set $G_1:= E_{1,0}\cap D_1$ and $E_1:= E_{1,0}\setminus G_1$.
For each $i\in \{2,\dots ,\alpha \}$ we define recursively the integers $a_i$ and $ b_i$, the
curves $A_i\in \vert \mathcal {O}_Q(2,1)\vert$, $D_i\in \vert \mathcal {O}_Q(1,2)\vert$ and the sets $F_i$, $E_{i,0}$, $G_i$, $E_i$ in the following way.
Take $A_i\in \vert \mathcal {O}_Q(2,1)\vert$ such that $a_i:= \sharp (A_i\cap E_{i-1})$
is maximal. Set $F_i:= E_{i-1}\cap A_i$ and $E_{i,0}:= E_{i-1}\setminus F_i$.  Take
$D_i\in \vert \mathcal {O}_Q(1,2)\vert$ such that $b_i:= \sharp (D_i\cap E_{i,0})$
is maximal and set $G_i:= D_i\cap E_{i,0}$ and $E_i:= E_{i,0}\setminus G_i$. For each $i\in \{1,\dots ,\alpha \}$ we have the
exact sequences
\begin{align}\label{eqe6}
&0 \to \mathcal {I}_{E_{i,0}}(u-3i+1,u-3i+2) \to \mathcal {I}_{E_{i-1}}(u-3i+3,u-3i+3) \notag \\ 
&\to \mathcal {I}_{F_i,A_i}(u-3i+3,u-3i+3) \to 0
\end{align}
\begin{align}\label{eqe7}
&0 \to \mathcal {I}_{E_i}(u-3i,u-3i) \to \mathcal {I}_{E_{i,0}}(u-3i+1,u-3i+2) \notag \\
& \to \mathcal {I}_{G_i,D_i}(u-3i+1,u-3i+2) \to 0
\end{align}
Notice that the sequences $\{a_i\}_{1 \le i \le \alpha}$ and $\{b_i\}_{1\le i \le \alpha }$ are non-increasing. If $a_i \le 4$, then $E_{i,0} = \emptyset$, because $h^0(Q,\mathcal {O}_Q(2,1)) =6$. If $b_i \le 4$, then $E_i=\emptyset$, because $h^0(Q,\mathcal {O}_Q(1,2)) =6$. Since $\sharp (E) \le 10\alpha$, we get $E_\alpha =\emptyset$.

\quad {\emph {Claim 1:}} For every $i\in \{1,\dots ,\alpha \}$ we have $a_i \le 3u-9i+10$.

\quad {\emph {Proof of Claim 1:}} Assume $a_i \ge 3u-9i+11$. Since at most $3u+1$ of the points
of $E$ are contained in a curve of type $(2,1)$, we have $i\ge 2$. Since the sequences $\{a_n\}$, $\{b_n\}$
are non-increasing and $b_j \ge 5$ if $a_{j+1}>0$, we get
$\sharp (E) \ge i(3u-9i+11) + 5(i-1) = i(3u +16 -9i) -5$. If $i=2$, then we
get $\sharp (E) \ge 6u-9 > 10\alpha $, a contradiction. For any $t\in \mathbb {R}$ set $\phi (t) = t(3u+16-9t)-5$.
The function $\phi$ is increasing in the interval $[0,(3u+16)/18]$ and decreasing if $t\ge (3t+16)/18$. Since $\sharp (E) < \phi (2)$ and $\phi (u/3) =16u/3-5 > \sharp (E)$, we get a contradiction.

\quad {\emph {Claim 2:}} For each $i \in \{1,\dots ,\alpha \}$ we have $h^1(A_i,\mathcal {I}_{F_i,A_i}(u-3i+3,u-3i+3))=0$.

\quad {\emph {Proof of Claim 2:}} By Claim 1 we have $\sharp (F_i) \le 3u-9i+10$. If $A_i$ is irreducible, then Claim 2 is true (e.g. by Lemma \ref{e2.0}). If $A_i$ is the union of $3$ lines,
then $a_i\le 3$ and Claim 2 is true (Lemma \ref{e2.0}). Now assume $A_i = T\cup D$ with
$T$ a smooth conic and $D$ of type $(1,0)$. By Lemma \ref{e2.0} Claim 2 is true
if $\sharp (F_i\cap T) \le 2u-6i+7$. Assume $\sharp (F_i\cap T) \ge 2u-6i+8$. Our assumptions on $E$ imply $i\ge 2$. We have $a_i \ge \sharp (A_i\cap T) \ge 2u-6i+8$. Since $b_j \ge 5$ if $E_{j+1} \ne \emptyset$, we get
$\sharp (E) \ge i(2u-6i+8) +5(i-1) = i(2u+13-6i)-5$. If $i=2$, then $\sharp (E) \ge 4u-3 > 10\alpha$, a contradiction. For every $t\in \mathbb {R}$ set $\psi (t):=
t(2u+13-6t) -5$. Since the function $\psi$ is increasing in the interval $[0,(2u+13)/12]$ and
decreasing for $t>(2u+13)/12$,
$\sharp (E) < \psi (2)$ and $\psi (\alpha ) \ge 13\alpha -5 > 10\alpha$, we get a contradiction.

\quad {\emph {Claim 3:}} For each $i\in \{1,\dots ,\alpha \}$ we have $b_i\le 3u-9i+5$.

\quad {\emph {Proof of Claim 3:}} Assume $b_i \ge 3u-9i+6$. Since $G_i\subseteq E$,
our assumptions on $E$ imply $i \ge 2$. Since $b_i>0$, we
have $a_j\ge 5$ for all $j\le i$. Hence $\sharp (E) \ge 5i + i(3u-9i+6) = i(3u+11 -9i)$. Set
$\tau (t) = t(3u+11-9t)$. The function $\tau (t)$ is increasing in the interval $[0,(3u+11)/18]$ and
decreasing if $t>(3u+11)/18$. Since $\tau (2)  = 6u-14 > \sharp (E)$ and
$\tau (\alpha ) \ge 11\alpha > \sharp (E)$, we get a contradiction.

\quad {\emph {Claim 4:}} For each $i \in \{1,\dots ,\alpha \}$ we have $h^1(D_i,\mathcal {I}_{G_i,D_i}(u-3i+1,u-3i+2))=0$.

\quad {\emph {Proof of Claim 4:}} We apply Lemma \ref{e2.0} taking $D_i\in \vert
\mathcal {O}_Q(1,2)\vert$ instead of an element of $\vert \mathcal {O}_Q(2,1)\vert$. If
$D_i$ is irreducible, then Claim 4 follows from Claim 3, because $D_i\cong \mathbb {P}^1$
and $\deg (\mathcal {O}_{D_i}(u-3i+1,u-3i+2))= 2(u-3i+1)+(u-3i+2)$. If $D_i$ is a union
of $3$ lines, then $b_i\le 3$; in this case we just use that $u-3i+1>0$ and $u-3i+2 > 0$.
Now assume $D_i = T\cup D$ with $T$ a smooth conic and $D$ a line. It is sufficient
to have $\sharp (T\cap G_i) \le 2u-6i+4$ (Lemma \ref{e2.0} for curves of type $(1,2)$). Assume $\sharp (T\cap G_i) \ge 2u-6i+5$. Since $T\cup I \in \vert \mathcal {O}_Q(2,1)\vert$ for all $I\in \vert \mathcal {O}_Q(1,0)\vert$ and $\sharp (T\cap E_{i-1}) \ge 2u-6i+5$, we get $a_i \ge 2u-6i+6$.
Hence $\sharp (E) \ge i(4u-12i+11)$. For any $t\in \mathbb {R}$ set $\eta (t) := t(4u-12t+11)$. The function $\eta (t)$ is increasing in the interval $0 \le t \le (4u+11)/24$ and decreasing
if $t> (4u+11)/24$. Since $\eta (1) = 4u-1 > 10\alpha$ and $\eta (\alpha) = \alpha (4u-12\alpha +11)\ge 11\alpha$, we get a contradiction.

By Claims 2 and 4 and the exact sequences (\ref{eqe6}) and (\ref{eqe7}) we get
$h^1(\mathcal {I}_E(u,u)) \le h^1(\mathcal {I}_{E_{\alpha }}(\beta ,\beta))$.  Since
$E_{\alpha } =\emptyset$,
we have $h^1(\mathcal {I}_{E_{\alpha }}(\beta ,\beta ))=0$.

\quad (ii) Now assume $v>u$. Write $E = F\sqcup F'$ with $\sharp (F') = \min \{\sharp (E), v-u\}$. Since
$\sharp (F')\le v-u$ and no two points of $E$ are contained in an element of $\vert \mathcal {O}_Q(0,1)\vert$, there is
a union $T\subset Q$ of $v-u$ disjoint elements of $\vert \mathcal {O}_Q(0,1)\vert$  such that
$F'\subset T$ and $T\cap F=\emptyset$. We have an exact sequence
\begin{equation}\label{eqe2}
0\to \mathcal {I}_F(u,u)\to \mathcal {I}_E(u,v)\to \mathcal {I}_{F',T}(u,v)\to 0
\end{equation}
Since $T$ is a disjoint union of $v-u$ lines, each of them
containing at most one point of $F'$, we have $h^1(T,\mathcal {I}_{F',T}(u,v))=0$. Step (i) gives $h^1(Q,\mathcal
{I}_F(u,u))=0$.
Hence (\ref{eqe2}) gives $h^1(Q,\mathcal {I}_E(u,v))=0$.
\end{proof}

\begin{lemma}\label{g4}
Fix integers $x, \alpha, \beta ,z$ such that $\alpha \ge 3$, $\beta \ge 2$, $0 \le x\le (\beta +1)^2$ and $0 \le z\le
10\alpha$. Set
$u:= 3\alpha +\beta$. Fix a general
$S\subset Q$ such that $\sharp (S)=x$. Fix $B\subset Q\setminus S$ such that $\sharp (B)
=z$, no line of $Q$ contains
$2$ points of $S\cup B$, $\sharp (B\cap T_1)\le 2u-2$ for
every $T_1\in \vert \mathcal {O}_Q(1,1)\vert$, $\sharp (B\cap T_2)\le 3u-4$ for every $T_2\in \vert \mathcal {O}_Q(2,1)\vert$ and $\sharp (B\cap T_3)\le 3u-9$ for every $T_3\in \vert \mathcal {O}_Q(1,2)\vert$. Then $h^1(\mathcal {I}_{S\cup B}(u,u))
= 0$.
\end{lemma}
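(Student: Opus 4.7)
The approach is a residuation argument. Since $\sharp(S)=x \le (\beta+1)^2 = h^0(\mathcal{O}_Q(\beta,\beta))$ and $S$ is general, the linear system $|\mathcal{I}_S(\beta,\beta)|$ is non-empty. I would choose a divisor $T \in |\mathcal{I}_S(\beta,\beta)|$ (with specific components, explained below) and use the residual exact sequence
\begin{equation*}
0 \to \mathcal{I}_{B \setminus T}(3\alpha,3\alpha) \to \mathcal{I}_{S\cup B}(u,u) \to \mathcal{I}_{(S\cup B)\cap T,T}(u,u) \to 0,
\end{equation*}
using $u-\beta = 3\alpha$ and the fact that $S\subset T$ and $B\cap S=\emptyset$. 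The desired vanishing $h^1(\mathcal{I}_{S\cup B}(u,u))=0$ then reduces to both $h^1(\mathcal{I}_{B\setminus T}(3\alpha,3\alpha))=0$ and $h^1(T, \mathcal{I}_{(S\cup B)\cap T,T}(u,u))=0$.

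For the first vanishing I would invoke Lemma \ref{e4} with $u=v=3\alpha$: the cardinality bound $\sharp(B\setminus T) \le z \le 10\alpha$ is immediate, and since no line of $Q$ contains two points of $S\cup B$, the same holds for $B\setminus T$. The delicate point is the intersection bounds: Lemma \ref{e4} requires at most $6\alpha+1$, $9\alpha+1$, $9\alpha-4$ points of $B\setminus T$ on curves of bidegrees $(1,1), (2,1), (1,2)$ respectively, whereas the hypotheses on $B$ only bound these by $2u-2$, $3u-4$, $3u-9$, each exceeding the Lemma's threshold by $O(\beta)$. The idea is to choose $T$ so that any \emph{bad} curve (one whose intersection with $B$ exceeds the Lemma \ref{e4} bound) appears as a component of $T$, thereby removing its excess points of $B$ when passing to $B \setminus T$.

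The key step is bounding the number of bad curves. Using that two distinct curves of the same bidegree on $Q$ meet in few points (two $(1,1)$-curves in $2$ points; two $(2,1)$- or $(1,2)$-curves in $4$ points) together with $\sharp(B)\le 10\alpha$, at most one bad curve of each of the three bidegrees can exist. Their total bidegree is therefore at most $(1+2+1,\, 1+1+2)=(4,4)$, comfortably fitting inside the $(\beta,\beta)$-curve $T$ when $\beta \ge 4$: the residual of $T$ to the bad curves has bidegree $\ge (\beta-4,\beta-4)$ and must pass through $S$ minus the points on the bad curves, whose cardinality is controlled by Remark \ref{g5} and the general position of $S$. The cases $\beta=2,3$, which I expect to be the main obstacle, require a finer inclusion-exclusion on $\sharp(B)$: for small $\beta$ one must rule out the simultaneous presence of bad curves of several bidegrees, using that pairwise intersections like $T_1\cap T_2$ with $T_1\in |(1,1)|$ and $T_2\in |(2,1)|$ have $3$ points, so that already two bad curves would force $\sharp(B)>10\alpha$.

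Finally, the cohomology on $T$, which may be reducible, is controlled component-by-component via a Mayer--Vietoris argument in the spirit of Lemma \ref{e2.0}: each component $C\subset T$ is a curve on $Q$ of bounded bidegree on which $\mathcal{O}_T(u,u)$ has large degree ($\ge u$ per line, $\ge 2u$ per smooth conic), while the number of imposed points $\sharp((S\cup B)\cap C)$ is controlled by Remark \ref{g5} for $S$ and by the hypothesis bounds for $B$, which are small compared to $u$. Splicing the Mayer--Vietoris sequences across the components of $T$ then yields the vanishing.
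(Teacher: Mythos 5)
There is a genuine gap, and it occurs at the very first step. You assert that since $\sharp (S)=x\le (\beta +1)^2=h^0(\mathcal {O}_Q(\beta ,\beta ))$ and $S$ is general, the system $\vert \mathcal {I}_S(\beta ,\beta )\vert$ is non-empty. The opposite is true at the boundary: a general $S$ imposes independent conditions, so $h^0(\mathcal {I}_S(\beta ,\beta )) = (\beta +1)^2-x$, which is $0$ when $x=(\beta +1)^2$. What the generality of $S$ gives you is $h^1(\mathcal {I}_S(\beta ,\beta ))=0$, not the existence of a curve of type $(\beta,\beta)$ through $S$. The problem is worse for the refined choice of $T$ you actually need: to absorb a bad curve of type, say, $(2,1)$ you must write $T$ as the bad curve plus a residual curve of type $(\beta -2,\beta -1)$ passing through at least $x-5$ points of $S$ (Remark \ref{g5} only lets the bad curve soak up at most $5$ points of $S$). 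This forces $x\le (\beta -1)\beta +5$, whereas the lemma allows $x$ up to $\beta ^2+2\beta +1$; the whole range $x>\beta^2-\beta+5$ is out of reach of your construction, and this large-$x$ range is precisely the case Lemma \ref{g4} exists to handle (for small $x$ one could try to fall back on Lemma \ref{e4} directly). Your analysis of the bad curves themselves is sound --- the intersection-number count does show that at most one bad curve can occur --- but the divisor $T$ you want to route through both $S$ and the bad curve simply need not exist.

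For contrast, the paper avoids ever producing a curve through all of $S$. It peels off $\alpha$ pairs of curves of types $(2,1)$ and $(1,2)$, each chosen to maximize its intersection with the \emph{remaining part of $B$ only} (the points of $S$ lying on these curves are removed incidentally, and Remark \ref{g5} bounds their number by $5$ per curve). Since each non-trivial step removes at least $10$ points of $B$ and $\sharp (B)\le 10\alpha$, after $\alpha$ steps the residual set $E_\alpha$ is contained in $S$, and the twist has dropped to $(\beta ,\beta )$; the proof then closes with $h^1(\mathcal {I}_{E_\alpha}(\beta ,\beta ))\le h^1(\mathcal {I}_S(\beta ,\beta ))=0$, which holds for general $S$ with $x\le (\beta +1)^2$ even when $h^0$ vanishes. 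If you want to salvage a residuation-style argument you would need to replace your single divisor $T$ through $S$ by some such iterative scheme, or restrict the statement to much smaller $x$.
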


\begin{proof}
Set $E_0:= S\cup B$ and $B_0:= B$. Since $S$ is general, we have $\sharp (S\cap T_1)\le 3$ for
every $T_1\in \vert \mathcal {O}_Q(1,1)\vert$, $\sharp (S\cap T_2)\le 5$ for every $T_2\in \vert \mathcal {O}_Q(2,1)\vert$ and $\sharp (S\cap T_3)\le 5$ for every $T_3\in \vert \mathcal {O}_Q(1,2)\vert$ (Remark \ref{g5}). Hence $\sharp
(E_0\cap T_1)\le 2u+1$ for every $T_1\in \vert \mathcal {O}_Q(1,1)\vert$, $\sharp (E_0\cap T_2)\le 3u+1$ for every element
of $\vert \mathcal {O}_Q(2,1)\vert$ and $\sharp (E_0\cap T_3)\le 3u-4$ for every element
of $\vert \mathcal {O}_Q(1,2)\vert$.
Take any
$A_1\in
\vert
\mathcal {O}_Q(2,1)\vert$ such that
$a_1:= \sharp (B_0\cap A_1)$ is maximal. Set $F_1:= A_1\cap E_0$, $B'_1:= A_1\cap B_0$, $E_{1,0}:= E_0\setminus F_1$
and $B_{1,0}:= B_0\setminus B'_1$.
Let $D_1\in \vert \mathcal {O}_Q(1,2)\vert$ be a curve such that $b_1:= \sharp (B_{1,0}\cap D_1)$ is maximal. Set
$G_1:= E_{1,0}\cap D_1$, $B''_1:= B_{1,0}\cap D_1$, $B_1:= B_{1,0}\setminus B''_1$ and $E_1:= E_{1,0}\setminus G_1$. For each
$i\in \{2,\dots ,\alpha \}$ we define recursively the integers $a_i$, $ b_i$, the curves $A_i\in \vert \mathcal
{O}_Q(2,1)\vert$, $D_i\in \vert \mathcal {O}_Q(1,2)\vert$ and the sets $F_i$, $E_{i,0}$, $G_i$, $B'_i$, $B_{i,0}$, $B_i$, $B''_i$, $E_i$ in the
following way. Take
$A_i\in \vert \mathcal {O}_Q(2,1)\vert$ such that $a_i:= \sharp (A_i\cap B_{i-1})$ is maximal. Set $F_i:= E_{i-1}\cap A_i$,
$B'_i:= A_i\cap B_{i-1}$, $B_{i,0}:= B_{i-1} \setminus B'_i$ and
$E_{i,0}:= E_{i-1}\setminus F_i$. Take
$D_i\in \vert \mathcal {O}_Q(1,2)\vert$ such that $b_i:= \sharp (D_i\cap B_{i,0})$
is maximal and set $G_i:= E_{i,0}\cap D_i$, $E_i:= E_{i,0}\setminus G_i$, $B''_i:= B_{i,0}\cap D_i$ and $B_i:= B_{i,0}\setminus B''_i$. For each $i\in \{1,\dots ,\alpha \}$ we have the exact sequences (\ref{eqe6}) and (\ref{eqe7}).
Notice that the sequences $\{a_i\}_{1 \le i \le \alpha}$ and $\{b_i\}_{1\le i \le \alpha }$ are non-increasing. If $a_i \le 4$, then $E_{i,0} = \emptyset$, because $h^0(Q,\mathcal {O}_Q(2,1)) =6$. If $b_i \le 4$, then $E_i=\emptyset$, because $h^0(Q,\mathcal {O}_Q(1,2)) =6$.

\quad {\emph {Claim 1:}} For every $i\in \{1,\dots ,\alpha \}$ we have $a_i \le 3u-9i+5$ and $\sharp (A_i\cap E_{i-1})\le
3u-9i+10$.

\quad {\emph {Proof of Claim 1:}} Since $\sharp (A_i\cap S)\le 5$, it is sufficient
to prove the inequality $a_i\le 3u-9i+5$. Assume $a_i \ge 3u-9i+6$. Since at most
$3u-4$ of the points of $B$ are contained in a curve of type $(2,1)$, we have $i\ge 2$. Since the sequences $\{a_j\}$ and $\{b_j\}$
are non-increasing and $b_j \ge 5$ if $a_{j+1}>0$, we get
$\sharp (B) \ge i(3u-9i+6) + 5(i-1) = i(3u +11 -9i) -5$. If $i=2$, then we
get $\sharp (B) \ge 6u-19 \ge 18\alpha +6\beta -19$, contradicting
the assumptions $\sharp (B) \le 10\alpha $, $\alpha \ge 3$ and $\beta >0$. For any $t\in \mathbb {R}$ set $\phi (t) = t(3u+11-9t)-5$.
The function $\phi$ is increasing in the interval $[0,(3u+11)/18]$ and decreasing if $t\ge (3t+11)/18$. Since $\sharp (B) < \phi (2)$ and $\phi (\alpha ) =\alpha (3u+11-9\alpha) -5  = \alpha (3\beta +11) -5 > 10\alpha \ge \sharp (B)$, we get a contradiction.

\quad {\emph {Claim 2:}} For each $i \in \{1,\dots ,\alpha \}$ we have $h^1(A_i,\mathcal {I}_{F_i,A_i}(u-3i+3,u-3i+3))=0$.

\quad {\emph {Proof of Claim 2:}} By Claim 1 we have $\sharp (F_i) \le 3u-9i+10$. If $F_i$ is irreducible, then Claim 2 is true (e.g. by Lemma \ref{e2.0}). If $A_i$ is the union of $3$ lines,
then $\sharp (F_i)\le 3$ and Claim 2 is true (Lemma \ref{e2.0}). Now assume $A_i = T\cup D$ with
$T$ a smooth conic and $D$ of type $(1,0)$. By Lemma \ref{e2.0} Claim 2 is true
if $\sharp (E_{i-1}\cap T) \le 2u-6i+7$. Assume $\sharp (E_{i-1}\cap T) \ge 2u-6i+8$. Since
$\sharp (S\cap T)\le 3$, we get
$\sharp (B_{i-1}\cap T) \ge 2u-6i+5$. Since $\sharp (T_1\cap B)\le 2u-2$ for every $T_1\in \vert \mathcal {O}_Q(1,1)\vert$, we have $i\ge 2$. We have $a_i \ge \sharp (B_{i-1}\cap T) \ge 2u-6i+5$. Since $b_j \ge 5$ if $B_{j+1} \ne \emptyset$, we get
$\sharp (B) \ge i(2u-6i+5) +5(i-1) = i(2u+10-6i)-5$. If $i=2$, then $\sharp (B) \ge 4u-9$, a contradiction. For every $t\in \mathbb {R}$ set $\psi (t):=
t(2u+10-6t) -5$. The function $\psi$ is increasing in the interval $[0,(u+5)/6]$ and
decreasing for $t>(u+5)/6$.
Since $\sharp (B) < \psi (2)$ and $\psi (\alpha ) = \alpha (2u+10-6\alpha ) =\alpha (10+2\beta )> 10\alpha \ge \sharp (B)$, we get a contradiction.

\quad {\emph {Claim 3:}} For each $i\in \{1,\dots ,\alpha \}$ we have $b_i\le 3u-9i$
and $\sharp (G_i) \le 3u-9i+5$.

\quad {\emph {Proof of Claim 3:}} Since $\sharp (S\cap D_i)\le 5$, it is sufficient
to prove $b_i\le 3u-9i$. Assume $b_i \ge 3u-9i+1$. Since $G_i\subseteq D_i$,
our assumptions on $B$ gives $i \ge 2$. Since $b_i>0$, we
have $a_j\ge 5$ for all $j\le i$. Hence $\sharp (B) \ge 5i + i(3u-9i+1) = i(3u+6 -9i)$. Set
$\tau (t) = t(3u+6-9t)$. The function $\tau (t)$ in increasing in the interval $[0,(3u+6)/18]$ and
decreasing if $t>(3u+6)/18$. Since $\tau (2)  = 6u-24
\ge 18\alpha +3\beta -24 >  10\alpha \ge \sharp (B)$ and
$\tau (\alpha ) = \alpha  (3u+6-9\alpha )
\ge \alpha (6+3\beta ) \ge 12\alpha$, we get a contradiction.

\quad {\emph {Claim 4:}} For each $i \in \{1,\dots ,\alpha \}$ we have $h^1(D_i,\mathcal {I}_{G_i,D_i}(u-3i+1,u-3i+2))=0$.

\quad {\emph {Proof of Claim 4:}} We apply Lemma \ref{e2.0} taking $D_i\in \vert
\mathcal {O}_Q(1,2)\vert$ instead of an element of $\vert \mathcal {O}_Q(2,1)\vert$. If
$D_i$ is irreducible, then Claim 4 follows from Claim 3, because $D_i\cong \mathbb {P}^1$
and $\deg (\mathcal {O}_{D_i}(u-3i+1,u-3i+2))= 2(u-3i+1)+(u-3i+2)$. If $D_i$ is a union
of $3$ lines, then $b_i\le 3$; in this case we just use that $u-3i+1>0$ and $u-3i+2 > 0$.
Now assume $D_i = T\cup D$ with $T$ a smooth conic and $D$ a line. It is sufficient
to have $\sharp (G_i\cap T) \le 2u-6i+4$ (Lemma \ref{e2.0} for curves of type $(1,2)$).
Assume $\sharp (T\cap G_i) \ge 2u-6i+5$. Since $S$ is general and $h^0(\mathcal {O}_Q(1,1))=4$, we have $b_i = \sharp (T\cap B''_i) \ge 2u-6i+2$. Since $T\cup I\in \vert \mathcal {O}_Q(2,1)\vert$ for
each $I\in \vert \mathcal {O}_Q(1,0)\vert$ and $\sharp (T\cap B_{i-1}) \ge 2u-6i+2$, we have $a_i \ge 2u-6i+3$. Hence $a_j\ge 2u-6i+3$ for
all $j\le i$. Hence $\sharp (B) \ge i(4u-12i+5)$. Set $\eta _1(t):= t(4u+5-12t)$. The function $\eta _1(t)$ is increasing if $0 \le t \le  (4u+5)/24$
and decreasing if $t>(4u+5)/24$. We have $\eta _1(1) = 4u-7 = 12\alpha +4\beta -7 > 10\alpha$. We have
$\eta _1(\alpha ) = \alpha (4\beta +5)  \ge 13\alpha > 10\alpha$.
Since $\sharp (B) \le 10 \alpha$, we get a contradiction.

By Claims 2 and 4 and the exact sequences (\ref{eqe6}) and (\ref{eqe7}) we get
$h^1(\mathcal {I}_E(u,u)) \le h^1(\mathcal {I}_{E_{\alpha }}(\beta ,\beta ))$. We have
$E_{\alpha } \subseteq S $. Since $\sharp (S) = x \le (\beta +1)^2$ and $S$ is general,
we have $h^1(\mathcal {I}_S(\beta ,\beta )) =0$. Hence $ h^1(\mathcal {I}_{E_{\alpha }}(\beta ,\beta ))=0$.\end{proof}

\section{$d_4(C)$ for the normalization $C$ of a nodal $Y\subset Q$}\label{S3}

For any finite set $S\subset Q$ let $2S$ denote the first infinitesimal neighborhood of $S$ in $Q$,
i.e. the closed subscheme of $Q$ with $(\mathcal {I}_S)^2$ as its ideal sheaf. The scheme
$2S$ is zero-dimensional, $(2S)_{red}= S$ and $\deg (2S)=3\cdot \sharp (S)$.

\begin{lemma}\label{g1}
Fix integers $a, b, x$ such that $b \ge a \ge 4$ and $0 \le 3x \le ab$. Fix a general
$S\subset Q$ such that $\sharp (S)=x$. We have $h^0(Q,\mathcal {I}_{2S}(a,b))
= (a+1)(b+1) -3x$. Fix a general $Y\in \vert \mathcal {I}_{2S}(a,b)\vert$. Then $Y$ is integral, nodal
and $\mbox{Sing}(Y)=S$.
\end{lemma}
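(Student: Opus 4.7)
The lemma has two parts: the dimension of $H^0$, and the geometric description of a general $Y$. The dimension claim reduces to a cohomological vanishing: from the sequence
$$0\to\mathcal{I}_{2S}(a,b)\to\mathcal{O}_Q(a,b)\to\mathcal{O}_{2S}(a,b)\to 0,$$
together with $h^i(\mathcal{O}_Q(a,b))=0$ for $i>0$ (as $a,b\ge 0$) and $h^0(\mathcal{O}_{2S}(a,b))=\deg(2S)=3x$, the equality $h^0(\mathcal{I}_{2S}(a,b))=(a+1)(b+1)-3x$ is equivalent to $h^1(\mathcal{I}_{2S}(a,b))=0$. By upper semicontinuity on the Hilbert scheme of $x$ points of $Q$, it suffices to produce one special $S_0$ of cardinality $x$ with this vanishing.

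I would prove the vanishing by a Horace-type induction on $b$, strengthened to cover schemes of mixed simple and double points. Pick a line $L\in\vert\mathcal{O}_Q(0,1)\vert$, so $\mathcal{O}_Q(a,b)\vert_L\cong\mathcal{O}_L(a)$, and specialize $k$ of the points of $S$ to general positions on $L$, with the remaining $x-k$ points general off $L$. The trace–residue sequence
$$0\to\mathcal{I}_{\mathrm{Res}_L(2S)}(a,b-1)\to\mathcal{I}_{2S}(a,b)\to\mathcal{I}_{(2S)\cap L,L}(a,b)\to 0$$
has trace a length-$2k$ subscheme of $L\cong\mathbb{P}^1$ twisted by $\mathcal{O}_L(a)$, whose $h^1$ vanishes whenever $2k\le a+1$. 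The residual is a mixed zero-dimensional subscheme: $x-k$ double points off $L$ and $k$ simple points on $L$, of total degree $3x-2k$. Taking $k:=\max\{0,\lceil(3x-a(b-1))/2\rceil\}$ makes the residual satisfy the inductive numerical bound $3x-2k\le a(b-1)$, while the hypothesis $3x\le ab$ together with $a\ge 4$ keeps $2k\le a+1$. An analogous induction in the other ruling absorbs the residual simple points, closing the induction.

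For the geometric part, $h^0(\mathcal{I}_{2S}(a,b))\ge a+b+1>0$ so the system is non-empty, and $h^1=0$ forces the base scheme of $\vert\mathcal{I}_{2S}(a,b)\vert$ to equal $2S$ scheme-theoretically. Applying Bertini after blowing up $S$ gives that a general $Y$ is smooth off $S$; at each $P\in S$, surjectivity of the natural evaluation $H^0(\mathcal{I}_{2S}(a,b))\to (\mathfrak{m}_P^2/\mathfrak{m}_P^3)^\vee$ (a $3P$-flavour of the vanishing above) together with a parameter count shows the tangent cone of a general $Y$ at $P$ is a pair of distinct lines, so $P$ is an ordinary node. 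Integrality follows by bounding, for each splitting type $(a_1,b_1)+(a_2,b_2)=(a,b)$, the dimension of the locus of reducible members of $\vert\mathcal{O}_Q(a,b)\vert$ that are singular at $x$ general points, and checking it is strictly smaller than $(a+1)(b+1)-1-3x$ under $a,b\ge 4$ and $3x\le ab$. The main obstacle is the Horace induction itself: the inequality $3x\le ab$ must propagate through degenerations that introduce mixed schemes of simple and fat points, requiring a carefully formulated strengthened inductive statement.
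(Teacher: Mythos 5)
Your reduction of the dimension count to $h^1(\mathcal{I}_{2S}(a,b))=0$ is fine, but the proposal has two genuine gaps. First, the vanishing itself is never actually proved: the Horace induction you sketch is exactly the delicate part, and you concede at the end that it ``requires a carefully formulated strengthened inductive statement'' without formulating or proving it. The paper sidesteps this entirely by quoting Baur--Draisma (Theorem 1.1 of \cite{bd}), which gives the stronger vanishing $h^1(\mathcal{I}_{2S}(a-1,b-1))=0$ for general $S$ --- note that the hypothesis $3x\le ab=h^0(\mathcal{O}_Q(a-1,b-1))$ is calibrated precisely for the twist $(a-1,b-1)$, not for $(a,b)$. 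Your induction, even if completed, only targets the twist $(a,b)$, and that is not enough for the second half of the lemma (see below). Moreover your bookkeeping only controls degrees; once the residual scheme contains simple points lying on the previously used line, those points are no longer general, so the inductive hypothesis ``general mixed scheme'' does not apply verbatim and the statement must be strengthened to allow collinear simple points --- this is where such inductions typically break if not set up carefully.

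Second, the step ``$h^1=0$ forces the base scheme of $\vert\mathcal{I}_{2S}(a,b)\vert$ to equal $2S$ scheme-theoretically'' is false as stated: $h^1(\mathcal{I}_Z(a,b))=0$ says the evaluation $H^0(\mathcal{O}_Q(a,b))\to H^0(\mathcal{O}_Z)$ is surjective, which does not control base points away from $Z$ (e.g.\ $Z$ a point and $(a,b)=(1,0)$: $h^1=0$ but the base locus is an entire line). What one needs is global generation of $\mathcal{I}_{2S}(a,b)$, which the paper extracts from the Castelnuovo--Mumford lemma using the vanishing one degree down, $h^1(\mathcal{I}_{2S}(a-1,b-1))=0$ --- again the twist where the hypothesis $3x\le ab$ is sharp. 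Your remaining two steps are viable but also only sketched: for nodality you would need a further vanishing for $2S'\cup 3P$ (one triple point) to get surjectivity onto $\mathfrak{m}_P^2/\mathfrak{m}_P^3$, whereas the paper avoids any new vanishing by exhibiting the explicit nodal member $D\cup D'\cup D''$ with $D'\cup D''$ the reducible conic singular at $P$; for irreducibility your case-by-case dimension count over splitting types is not carried out, whereas the paper invokes Arbarello--Cornalba (Proposition 4.1 of \cite{ac}) after checking $\omega_Q\cdot T<0$ and $p_a(Y)\ge x$. In short, the architecture of your argument is reasonable, but the load-bearing cohomological inputs are asserted rather than established, and one of them is invoked at the wrong twist.
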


\begin{proof}
We have $h^1(Q,\mathcal {I}_{2S}(a-1,b-1)) =0$ (\cite{bd}, Theorem 1.1). Hence
$h^1(Q,\mathcal {I}_{2S}(a,b)) =0$, i.e. $h^0(Q,\mathcal {I}_{2S}(a,b))
= (a+1)(b+1) -3x$. Since $h^1(Q,\mathcal {I}_{2S}(a-1,b-1)) =0$ and the line bundle
$\mathcal {O}_Q(1,1)$ is very ample, Castelnuovo-Mumford's lemma implies
that the sheaf $\mathcal {I}_{2S}(a,b)$ is spanned. Hence $\vert \mathcal {I}_{2S}(a,b)\vert$ has no base points outside
$S$. Bertini's theorem implies $S = \mbox{Sing}(Y)$. Fix $P\in S$ and set $S':= S\setminus \{P\}$.
Take a general $D\in \vert \mathcal {I}_{2S'}(a-1,b-1)\vert$. Since $h^1(Q,\mathcal {I}_{2S}(a-1,b-1)) =0$, we
have $h^1(Q,\mathcal {I}_{2S'\cup \{P\}}(a-1,b-1)) =0$. Hence $h^0(\mathcal {I}_{2S'\cup \{P\}}(a-1,b-1))
= h^0(\mathcal {I}_{2S'}(a-1,b-1))-1$. Since $D$ is general, we get $P\notin D$. Let $D'\cup D''
\subset Q$ be the reducible conic with $P$ as its singular locus. Since $P\notin D$,  $D\cup D'\cup
D''$ is an element of $\vert \mathcal {I}_{2S}(a,b)\vert$ with an ordinary node at $P$. Since $Y$ is general, it has an
ordinary node at $P$. Since this is true for all $P\in S$, $Y$ is nodal. For every irreducible component $T$ of $Y$ we have $\omega _Q\cdot T = \mathcal {O}_Q(-2,-2) \cdot T<0$. Since $b\ge a \ge 4$ and $3x \le ab$, we have $p_a(Y) = ab-a-b +1 \ge x$. Since $Y$ is nodal, no component of $Y$ appears with multiplicity
$\ge 2$. Since $S$ is general and
$S = \mbox{Sing}(Y)$, the curve $Y$ is irreducible (\cite{ac}, Proposition 4.1).
\end{proof}

\begin{lemma}\label{g2}
Fix integers $a, b, x$ such that $b \ge a \ge 4$ and $0 \le 3x \le (a-1)(b-1)$. Fix a general
$S\subset Q$ such that $\sharp (S)=x$. Fix zero-dimensional schemes $Z, Z'\subset Q$ such that $\deg (Z)=\deg (Z')=2$, $Z_{red}$
and $(Z')_{red}$ are distinct points, $Z$ is contained in a line $D_1\in \vert \mathcal {O}_Q(1,0)\vert$, $Z'$ is contained in a line $D_2\in \vert \mathcal {O}_Q(0,1)\vert$
and $Z\cap D_2=Z'\cap D_1=S\cap (D_1\cup D_2) =\emptyset$. Take a general
$Y\in \vert \mathcal {I}_{Z\cup Z'\cup 2S}(a,b)\vert$. Then 
$h^0(Q,\mathcal {I}_{Z\cup Z'\cup 2S}(a,b)) = (a+1)(b+1)-3x-4$, $Y$ is nodal, integral, 
$\mbox{Sing}(Y)=S$, $\sharp ((Y\cap D_1)_{red}) =b-1$ and $\sharp ((Y\cap D_2)_{red}) =a-1$.
\end{lemma}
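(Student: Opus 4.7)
The plan is to imitate the proof of Lemma \ref{g1}, treating the additional length--$2$ schemes $Z$ and $Z'$ as imposing tangency (but not singularity) conditions at $Z_{red}$ and $(Z')_{red}$ respectively, while still creating ordinary nodes at the points of $S$.

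First, I would compute $h^0$ by a double residual exact sequence. Since $Z\subset D_1$, $Z'\cap D_1=\emptyset$ and $S\cap D_1=\emptyset$, the residual of $Z\cup Z'\cup 2S$ with respect to $D_1$ is $Z'\cup 2S$, giving
$$0\to \mathcal{I}_{Z'\cup 2S}(a-1,b) \to \mathcal{I}_{Z\cup Z'\cup 2S}(a,b) \to \mathcal{I}_{Z,D_1}(a,b)\to 0,$$
with $\mathcal{I}_{Z,D_1}(a,b)\cong \mathcal{O}_{\mathbb{P}^1}(b-2)$. A second residual with respect to $D_2$ yields
$$0\to \mathcal{I}_{2S}(a-1,b-1) \to \mathcal{I}_{Z'\cup 2S}(a-1,b) \to \mathcal{I}_{Z',D_2}(a-1,b)\to 0,$$
with $\mathcal{I}_{Z',D_2}(a-1,b)\cong \mathcal{O}_{\mathbb{P}^1}(a-3)$. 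Under the hypothesis $3x\le (a-1)(b-1)$ the vanishing $h^1(\mathcal{I}_{2S}(a-1,b-1))=0$ holds by \cite{bd}, Theorem 1.1, exactly as in Lemma \ref{g1}. Combining the two sequences gives $h^1(Q,\mathcal{I}_{Z\cup Z'\cup 2S}(a,b))=0$ and hence the stated formula for $h^0$. The same sequences, used in degree one lower, give also $h^1(\mathcal{I}_{Z'\cup 2S}(a-1,b))=0$, which will be reused below.

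Second, I would establish that the generic $Y$ is smooth at $Z_{red}$ (and by symmetry at $(Z')_{red}$) and has an ordinary node at each $P\in S$. For the smoothness at $P_0:=Z_{red}$, observe that taking a general $Y_1\in |\mathcal{I}_{Z'\cup 2S}(a-1,b)|$, the curve $D_1\cup Y_1$ lies in $|\mathcal{I}_{Z\cup Z'\cup 2S}(a,b)|$; since $\mathcal{I}_{Z'\cup 2S}(a-1,b)$ is globally generated at $P_0$ (via Castelnuovo--Mumford, using the vanishing just obtained), we may further impose $P_0\notin Y_1$, so that $D_1\cup Y_1$ is smooth at $P_0$ with tangent line $D_1$. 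By upper semi--continuity the general $Y$ is also smooth at $P_0$ with tangent $D_1$. For the nodal condition at $P\in S$, I would reuse verbatim the construction in Lemma \ref{g1}: pick the unique reducible conic $D'\cup D''$ with node $P$, pick a general $R\in |\mathcal{I}_{Z\cup Z'\cup 2(S\setminus\{P\})}(a-2,b-2)|$ not passing through $P$, and exhibit $D'\cup D''\cup R$ as an element of the linear system with an ordinary node at $P$; semi--continuity promotes this to the general $Y$.

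Third, Bertini applied to $|\mathcal{I}_{Z\cup Z'\cup 2S}(a,b)|$ outside its base locus rules out further singularities, and integrality follows by the argument of Lemma \ref{g1}: the arithmetic genus $(a-1)(b-1)\ge 3x>x$, $\omega_Q\cdot T<0$ for any component $T$, no component appears with multiplicity $\ge 2$ since $Y$ is nodal, and a general $S$ forces $Y$ to be irreducible by \cite{ac}, Proposition 4.1. Finally, for the count of points on $D_1$, the surjectivity $H^0(\mathcal{I}_{Z\cup Z'\cup 2S}(a,b))\twoheadrightarrow H^0(\mathcal{I}_{Z,D_1}(a,b))$ (which holds because the kernel has vanishing $h^1$, as computed above) implies that the trace of $|\mathcal{I}_{Z\cup Z'\cup 2S}(a,b)|$ on $D_1$ is the complete linear system $|\mathcal{O}_{\mathbb{P}^1}(b-2)|$; a general element has $b-2$ distinct simple points disjoint from $Z_{red}$, so $Y\cap D_1$ consists set--theoretically of those $b-2$ points together with $Z_{red}$, giving $b-1$ reduced points. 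The analogous computation on $D_2$ gives $a-1$.

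The main obstacle I anticipate is the second step: verifying that the auxiliary system $|\mathcal{I}_{Z'\cup 2S}(a-1,b)|$ is base--point free at $Z_{red}$ (and similarly for $(Z')_{red}$), so that we can genuinely deform off $Z_{red}$ while keeping the tangency data along $D_1$; this is the place where the strength of the hypothesis $3x\le(a-1)(b-1)$ (as opposed to Lemma \ref{g1}'s weaker $3x\le ab$) is used to give the Castelnuovo--Mumford regularity needed for global generation at $Z_{red}$.
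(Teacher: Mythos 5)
Your overall strategy (exhibit degenerate members of $\vert \mathcal {I}_{Z\cup Z'\cup 2S}(a,b)\vert$ with the desired local behaviour and conclude by semicontinuity) is the same as the paper's, and two of your steps are sound: the $h^0$ computation via the two residual exact sequences along $D_1$ and $D_2$ is exactly what the paper's ``we immediately get'' hides, and your count of $\sharp ((Y\cap D_1)_{red})$ via surjectivity of the trace map onto $H^0(\mathcal {O}_{\mathbb {P}^1}(b-2))$ is correct and in fact cleaner than the paper's codimension count. The genuine gap is in the nodality step. The reducible conic $D'\cup D''$ has type $(1,1)$, so $D'\cup D''\cup R$ with $R\in \vert \mathcal {I}_{Z\cup Z'\cup 2(S\setminus \{P\})}(a-2,b-2)\vert$ has type $(a-1,b-1)$, not $(a,b)$: it is not a member of the linear system you are degenerating in. Moreover, at the boundary of your hypotheses the system you want $R$ to live in does not exist: when $3x=(a-1)(b-1)$ the scheme $Z\cup Z'\cup 2(S\setminus \{P\})$ has degree $3x+1>(a-1)(b-1)=h^0(\mathcal {O}_Q(a-2,b-2))$, so that linear system is empty. (Correcting the bidegree to $(a-1,b-1)$ makes the construction viable, but you would then still have to check nonemptiness and that $P$ is not a base point, i.e. $h^1(\mathcal {I}_{Z\cup Z'\cup 2(S\setminus \{P\})\cup \{P\}}(a-1,b-1))=0$.) The paper sidesteps all of this with a single degeneration: apply Lemma \ref{g1} to the bidegree $(a-1,b-1)$ --- this is precisely where the hypothesis $3x\le (a-1)(b-1)$ enters --- to get an integral nodal $T\in \vert \mathcal {I}_{2S}(a-1,b-1)\vert$ with $\mbox{Sing}(T)=S$ and $Z_{red},(Z')_{red}\notin T$; then $T\cup D_1\cup D_2$ is one member of $\vert \mathcal {I}_{Z\cup Z'\cup 2S}(a,b)\vert$ that is simultaneously smooth at $Z_{red}$ and $(Z')_{red}$ and has ordinary nodes at every point of $S$.

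A second, softer issue: for irreducibility you re-invoke \cite{ac}, Proposition 4.1, but $Y$ is a general member of $\vert \mathcal {I}_{Z\cup Z'\cup 2S}(a,b)\vert$, a codimension-four subsystem of $\vert \mathcal {I}_{2S}(a,b)\vert$, so it is not clear that $Y$ is general enough among nodal curves with nodes at $S$ for that proposition to apply. The paper instead exploits the same degeneration to $T\cup D_1\cup D_2$: since one member of the family has an irreducible component of type $(a-1,b-1)$, a reducible general $Y$ could only split off a curve of type $(0,1)$, $(1,0)$ or $(1,1)$, and each of these possibilities is excluded because $Y$ is nodal, $\mbox{Sing}(Y)=S$, no $a-1$ points of the general set $S$ lie on a line and no conic contains $a+b-2$ of them. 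You should either justify the applicability of \cite{ac} in this constrained setting or adopt this splitting argument.
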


\begin{proof}
We have $h^1(Q,\mathcal {I}_{2S}(a-2,b-2)) =0$ (\cite{bd}, Theorem 1.1). We immediately get
$h^1(Q,\mathcal {I}_{Z\cup Z'\cup 2S}(a,b)) = 0$, i.e. $h^0(Q,\mathcal {I}_{Z\cup Z' \cup 2S}(a,b)) = (a+1)(b+1)-3x-4$. We also see that $h^1(Q,\mathcal {I}_{Z\cup Z'\cup 2S}(a-1,b-1)) = 0$.
Hence $\mathcal {I}_{Z\cup Z'\cup 2S}(a,b)$ is spanned by Castenuovo-Mumford's lemma.
Hence $Y$ is smooth outside $S\cup \{Z_{red},(Z')_{red}\}$.
Lemma \ref{g1} applied to the integers $a-1$ and $b-1$ gives the existence of an
integral and nodal curve $T\in \vert \mathcal {I}_{2S}(a-1,b-1)\vert$ such that $S= \mbox{Sing}(T)$. Since
$\mathcal {I}_{2S}(a-1,b-1)$ is spanned, we may find $T$ as above and with $Z_{red}\notin T$ and $(Z')_{red}\notin T$.
Since $T\cup D_1\cup D_2
\in \vert \mathcal {I}_{Z\cup Z' \cup 2S}(a,b)\vert$
and $S\cap (D_1\cup D_2) =\emptyset$, we get that $Y$ is smooth at $(Z)_{red}$ and at $(Z')_{red}$
and nodal at each point of $S$. Since $T$ is irreducible and $Y$ is general, either $Y$ is irreducible or $Y = T_1\cup A_1$ with $T_1\in \vert \mathcal {O}_Q(a,b-1)\vert$
and $A_1\in \vert \mathcal {O}_Q(0,1)\vert$ or $Y = T_2\cup A_2$ with $T_2\in \vert \mathcal {O}_Q(a-1,b)\vert$
and $A_2\in \vert \mathcal {O}_Q(1,0)\vert$ or $Y = T_3\cup A_3$ with $T_3\in \vert \mathcal {O}_Q(a-1,b-1)\vert$
and $A_3\in \vert \mathcal {O}_Q(1,1)\vert$. The last three cases are impossible, because $Y$ is nodal, $\mbox{Sing}(Y) =S$, no $a-1$ of the points of $S$ are contained
in a line and no conic contains $a+b-2$ points of $S$. 

Since $Z\subseteq D_1\cap Y$, we have $\sharp (Y\cap D_1) \le b-1$. Since $\mathcal {I}_{2S\cup Z\cup Z'}(a,b)$ is spanned and $Y$ is general, $Y$ does not
contain the degree $3$ divisor of $D_1$ with $Z_{red}$ as its support. Hence $Z_{red}$ appears with multiplicity two in the scheme
$Y\cap D_1$. We need to prove that the other points of $(Y\cap D_1)_{red}$ appear with multiplicity one in the scheme $Y\cap
D_1$. Fix $P\in D_1\setminus Z_{red}$ and let $W\subset D_1$ be the divisor of degree two with $P$ as its support. Since
$h^1(\mathcal {I}_{2S}(a-1,b-1))=0$ and $b\ge a \ge 3$, we have $h^1(\mathcal {I}_{2S\cup Z\cup Z'\cup W}(a,b))=0$. Hence
$\vert \mathcal {I}_{2S\cup Z\cup Z'\cup W}(a,b)\vert$ has codimension two in $\vert \mathcal {I}_{2S\cup Z\cup
Z'}(a,b)\vert$. Since $\dim (D_1)=1$ and $Y$ is general, $Y$ contains no such scheme $W$. Hence $\sharp ((Y\cap D_1)_{red})
=b-1$. In the same way we prove that
$\sharp ((Y\cap D_2)_{red}) =a-1$.\end{proof}

\begin{lemma}\label{f2}
Fix integers $a, m, x$ such that $a\ge 4$, $0 \le m < a$ and $0 \le 3x \le (a-1)(a+m-1)$. Fix a general $S\subset
Q$ such that $\sharp (S)=x$. Let $C$ be the normalization of a general $Y \in \vert \mathcal {I}_{2S}(a,a+m)\vert$. Let $u_1: C \to \mathbb {P}^1$ (resp. $u_2: C\to \mathbb {P}^1$) be the $g^1_a$ (resp. $g^1_{a+m}$) induced by the projection of $Q$ onto its second (resp. first) factor.
Then neither $u_1$ nor $u_2$ is composed with an involution, i.e. there
are no triple $(C_i,v_i,w_i)$ with $C_i$ a smooth curve, $w_i: C \to C_i$, $v_i: C_i\to \mathbb {P}^1$, $u_i = v_i\circ w_i$, $\deg (v_i)>1$ and $\deg (w_i)>1$.
\end{lemma}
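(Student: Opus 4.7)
My plan is to prove the statement for $u_1$; the argument for $u_2$ is identical after exchanging the two rulings of $Q$ and replacing $a$ by $a+m$. The crucial reformulation is that $u_1$ factors as $u_1=v_1\circ w_1$ with $\deg w_1,\deg v_1\ge 2$ if and only if the monodromy group $G\le S_a$ of the degree-$a$ covering $u_1\colon C\to \mathbb P^1$ is imprimitive. So the goal is to show $G=S_a$ for a general $Y\in |\mathcal I_{2S}(a,a+m)|$, since $S_a$ is primitive for $a\ge 2$.

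Since $Y$ is integral (Lemma \ref{g1}), the curve $C$ is connected and $G$ is transitive. By the classical fact that a transitive subgroup of $S_a$ generated by transpositions must equal $S_a$, I only need to show that for a general $Y$ the local monodromy of $u_1$ at each branch point is a single transposition. This is implied by the following four genericity assertions for $Y$: (a) every tangency of a ruling $L\in |\mathcal O_Q(0,1)|$ to $Y$ at a smooth point of $Y$ has order exactly $2$; (b) no such ruling is tangent to $Y$ at two distinct smooth points; (c) at each node $P\in S$ both branches of $Y$ at $P$ are transverse to the ruling of $|\mathcal O_Q(0,1)|$ through $P$; (d) no ruling of $|\mathcal O_Q(0,1)|$ contains simultaneously a node of $Y$ and a smooth tangency point.

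Each of (a)--(d) is a codimension-$\ge 1$ condition on $|\mathcal I_{2S}(a,a+m)|$, whose dimension is at least $4a+2m-1$ by combining the vanishing $h^1(\mathcal I_{2S}(a-1,a+m-1))=0$ used in Lemma \ref{g1} with the bound $(a+1)(a+m+1)-3x\ge 4a+2m$ arising from $3x\le (a-1)(a+m-1)$. For (a), (b), and (d) I would perform a standard incidence-variety dimension count on the pairs (ruling, curve in the linear system). The genericity of $S$ guarantees that no two points of $S$ lie on a common ruling of either type, so through each $P\in S$ there is a unique ruling $L_P\in |\mathcal O_Q(0,1)|$. The main obstacle I expect is (c): one has to verify that the tangent cone at each node $P$ of a general $Y$ in the linear system avoids the direction of $L_P$. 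I would handle this by the same cohomological device as in Lemma \ref{g2}, adding an auxiliary length-$2$ scheme $W$ supported at $P$ in the direction of $L_P$ and checking $h^1(\mathcal I_{2S\cup W}(a-1,a+m-1))=0$, which still holds in the given range of $x$ by the method of Lemmas \ref{g1} and \ref{g2}. Once (a)--(d) are established, the monodromy of $u_1$ is transitive and generated by transpositions, hence equals $S_a$, so $u_1$ is not composed. The analogous argument applied to rulings of $|\mathcal O_Q(1,0)|$ proves the statement for $u_2$.
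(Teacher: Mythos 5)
Your proposal is correct in strategy but takes a genuinely different route from the paper. The paper never computes the monodromy group: using Lemma \ref{g2} it exhibits one special curve $M\in \vert \mathcal {I}_{Z\cup Z'\cup 2S}(a,a+m)\vert$ meeting a fixed ruling in one simple tangency plus reduced points, so that each $u_i$ acquires a single fiber consisting of $\deg (u_i)-1$ points, exactly one of them an ordinary ramification point, and then transfers this fiber to the general $Y$ by an equitrivial deformation. You instead establish the stronger fact that the monodromy of $u_1$ is all of $S_a$, by forcing \emph{every} branch fiber of the general $Y$ to carry a single transposition and quoting the classical fact that a transitive subgroup of $S_a$ generated by transpositions is $S_a$; this certainly implies non-composedness, and it is the more robust conclusion, since a composed covering $v_i\circ w_i$ can in principle possess one fiber with a unique ordinary ramification point (ramify $w_i$ simply over a point where $v_i$ is unramified), so the single-fiber criterion requires more care than your global one. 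The price of your route is a handful of genericity verifications, of which only (c) is delicate: the correct cohomological formulation is that the general $Y$ (in the twist $(a,a+m)$, not $(a-1,a+m-1)$) should not contain the length-$3$ divisor of $L_P$ supported at a node $P\in S$; this imposes one condition beyond $2S$ at a point of $S$, hence is not literally covered by Lemmas \ref{g1} and \ref{g2}, but it does follow by residuation along $L_P$ from $h^1(\mathcal {I}_{2S}(a,a+m-1))=0$, because the residual scheme is contained in $2S$ while the trace on $L_P\cong \mathbb {P}^1$ has length $3\le a$. With that adjustment, and the analogous residuation giving the $h^1$-vanishing needed for the incidence counts in (a), (b), (d) (schemes of length at most $4$ inside a ruling disjoint from $S$), your argument goes through; what it buys over the paper's is the explicit identification of the monodromy, at the cost of controlling all fibers rather than constructing one good one.
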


\begin{proof}
It is sufficient to find $O\in \mathbb {P}^1$ and $O'\in \mathbb {P}^1$ such that $u_2^{-1}(O)$
is formed by $a+m-1$ points, one of them being an ordinary ramification point of $u_2$,
and $u_1^{-1}(O')$
is formed by $a-1$ points, one of them being an ordinary ramification point of $u_2$.
Fix zero-dimensional schemes $Z, Z'\subset Q$ such that $\deg (Z)=\deg (Z')=2$, $Z_{red}$
and $(Z')_{red}$ are distinct points, $Z$ is contained in a line $D\in \vert \mathcal {O}_Q(1,0)\vert$, $Z'$ is contained
in a line $D'\in \vert \mathcal {O}_Q(0,1)\vert$ and $S\cap (D\cup D') =\emptyset$. Take a general
$M\in \vert \mathcal {I}_{Z\cup Z'\cup 2S}(a,a+m)\vert$. Lemma \ref{g2} gives that $M$
is a nodal and irreducible curve,  $\mbox{Sing}(M) =S$, $ \sharp ((D\cap M)_{red}) = a+m-1$ and $\sharp ((D'\cap M)_{red}) =a-1$. Let $u': C' \to M$ be the normalization of $M$.
Call $u'_1: C'\to \mathbb {P}^1$ and $u'_2: C'\to \mathbb {P}^1$ the pencils induced by the projections of $Q$. The scheme $M\cap D$ is the disjoint
union of $Z$ and $a+m-2$ distinct points and the scheme $M\cap D'$ is the disjoint
union of $Z'$ and $a-2$ distinct points. Since $(Z')_{red}\cap S = \emptyset$ and $Z_{red}\cap S =\emptyset$, there
are unique points $O_1, O'_1\in C$ such that $u'(O_1) =Z_{red}$ and $u'(O'_1) = (Z')_{red}$. Set $O':= u_2(O'_1)$ and $O:= u_1(O_1)$. The $a-2$ (resp. $a+m-2$) points
of $u_1^{-1}(O')\setminus \{O'_1\}$ (resp. $u_2^{-1}(O)\setminus \{O_1\}$) appear with multiplicity one in the fiber
$u_1^{-1}(O')$, because $u$ is a local isomorphism at each of these points and $D'$ (resp. $D$) is transversal to $M$ outside
$(Z')_{red}$ (resp. $Z_{red}$). Hence $O_1$ (resp. $O'_1$) is an ordinary ramification point of $u'_2$ (resp. $u'_1$). Since
$Y$ is a general equitrivial deformation of $M$ inside $\vert \mathcal {I}_{2S}(a,a+m)\vert$, each $u_i$ has a fiber with a unique ramification
point and this ramification point is an ordinary one.\end{proof}

\begin{proposition}\label{g5}
Fix integers $a, m, x$ such that $a \ge 18$, $x\ge 0$, $0\le m<a$ and 
\begin{equation}\label{eqg00}
x\le a/3 +m
\end{equation}
Fix a general $S\subset Q$ such that $\sharp (S)=x$. A general $Y\in \vert \mathcal {I}_{2S}(a,a+m)\vert$ is an integral
nodal curve with $S$ as its singular locus. Let $u: C \to Y$ denote the normalization map. We have $g(C)=
a^2+am-2a-m+1 -x$ and $d_4(C) \ge 3a-14$.
If $a \ge 4m+43$, then $d_4(C)/4 > d_3(C)/3$.
\end{proposition}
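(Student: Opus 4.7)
My plan is to treat the four assertions in sequence. The statements about $Y$ and $g(C)$ follow from Lemma \ref{g1} with $b = a+m$: the hypothesis $3x \le a(a+m)$ reduces, via $x \le a/3+m$ and $m < a$, to $3x \le a + 3m \le 4a \le a(a+m)$, valid for $a \ge 4$. This gives $Y$ integral nodal with $\mathrm{Sing}(Y) = S$ and $g(C) = p_a(Y) - x = (a-1)(a+m-1) - x = a^2 + am - 2a - m + 1 - x$. In parallel, pulling back $\mathcal{O}_Q(1,1)$ along $u$ yields a line bundle on $C$ of degree $(a, a+m)\cdot(1,1) = 2a+m$; the restriction map $H^0(Q,\mathcal{O}_Q(1,1)) \to H^0(Y,\mathcal{O}_Y(1,1))$ is injective (since $H^i(\mathcal{O}_Q(1-a, 1-a-m)) = 0$ for $i = 0, 1$), so $h^0 \ge 4$ and $d_3(C) \le 2a+m$.

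For the key bound $d_4(C) \ge 3a-14$, I argue by contradiction: assume $L \in \mathrm{Pic}^z(C)$ satisfies $z \le 3a-15$ and $h^0(L) \ge 5$. Take a general reduced $A \in |L|$ with $A \cap u^{-1}(S) = \emptyset$, and set $B := u(A) \subset Y \setminus S$, so $\sharp B = z$. By the classical adjoint formula for nodal curves in a smooth surface, $H^0(\omega_C) \cong H^0(Q,\mathcal{I}_S(a-2,a+m-2))$ and, since $B$ lies in the smooth locus, $H^0(\omega_C(-A)) \cong H^0(\mathcal{I}_{S \cup B}(a-2,a+m-2))$. Riemann--Roch on $C$ yields $h^0(\omega_C(-A)) = h^1(L) = h^0(L) - z + g(C) - 1 \ge g(C) - z + 4$, while $\chi(\mathcal{I}_{S \cup B}(a-2,a+m-2)) = (a-1)(a+m-1) - (x+z) = g(C) - z$; hence $h^1(\mathcal{I}_{S \cup B}(a-2,a+m-2)) \ge 4 > 0$.

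To contradict this with Lemma \ref{g4} (which treats square twists $(u,u)$), I first absorb the shift $m$ by choosing $m$ disjoint lines of type $(0,1)$, each through a distinct point of $B$ (possible generically). The restriction exact sequence $0 \to \mathcal{I}_E(a-2,a-2) \to \mathcal{I}_{S \cup B}(a-2,a+m-2) \to \mathcal{I}_{F',T}(a-2,a+m-2) \to 0$, together with the trivial vanishing $h^1(T,\mathcal{I}_{F',T}(a-2,a+m-2)) = 0$ on the union $T$ of these lines (each a $\mathbb{P}^1$ carrying one marked point, twisted by $\mathcal{O}(a-2)$), reduces the task to showing $h^1(\mathcal{I}_E(a-2,a-2)) = 0$ where $E := (S \cup B)\setminus F'$. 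I then invoke Lemma \ref{g4} with (in its notation) $u = a-2$, $\alpha := \max\{3, \lceil (z-m)/10 \rceil\}$, and $\beta := u - 3\alpha$; a direct check confirms $\alpha \ge 3$, $\beta \ge 2$, $\sharp(B \setminus F') \le 10\alpha$, and $(\beta+1)^2 \ge x$ under $a \ge 18$ and $z \le 3a-15$.

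The main obstacle I anticipate is the genericity step (Step ($\diamond$), credited to the referee): verifying Lemma \ref{g4}'s remaining combinatorial hypotheses on $B \setminus F'$, namely that no line of $Q$ contains two points of $S \cup (B \setminus F')$, and the intersection bounds with curves of types $(1,1)$, $(2,1)$, $(1,2)$. The $(2,1)$- and $(1,2)$-bounds hold automatically since $\sharp(B \setminus F') \le z \le 3a-15$. For the line and $(1,1)$-bounds, I would use that $|L|$ has dimension $\ge 4$ together with Lemma \ref{f2} (neither $u_1$ nor $u_2$ is composed with an involution) to show that a generic $A \in |L|$ cannot concentrate points on any fiber of $u_1$, $u_2$, or on any $(1,1)$-conic; any $L$ forcing such concentration would be a pullback or restriction of a line bundle from a linear system of dimension $< 4$, contradicting the assumption $h^0(L) \ge 5$. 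Finally, combining $d_4(C) \ge 3a-14$ with $d_3(C) \le 2a+m$, the slope comparison $3 d_4(C) > 4 d_3(C)$ rearranges to $a > 4m+42$, which is the hypothesis $a \ge 4m+43$.
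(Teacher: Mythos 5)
Your overall architecture matches the paper's: bound $d_3(C)\le 2a+m$ by pulling back $\mathcal{O}_Q(1,1)$, assume $z:=d_4(C)\le 3a-15$, push a general $A\in\vert L\vert$ down to $B\subset Q\setminus S$, show $h^1(\mathcal{I}_{S\cup B}(a-2,a+m-2))>0$ via the adjoint identification $H^0(\omega_C)\cong H^0(\mathcal{I}_S(a-2,a+m-2))$, and contradict this with the vanishing lemmas of Section~\ref{S2}. Your Euler-characteristic derivation of $h^1\ge 4$ is a clean variant of the paper's Claim~1 (which instead uses base-point-freeness of $L$, via \cite{lm}, Lemma 3.1~(b) --- a fact you also need in order to choose $A$ reduced and disjoint from $u^{-1}(S)$, so you should cite it). Routing through Lemma~\ref{g4} after peeling off $m$ ruling lines, rather than applying Lemma~\ref{e4} directly, is legitimate: it reproduces step~(ii) of the proof of Lemma~\ref{e4}, and your numerical choices of $\alpha,\beta$ do satisfy the required inequalities (I checked the borderline cases $18\le a\le 21$). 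The final rearrangement $3(3a-14)>4(2a+m)\iff a\ge 4m+43$ is correct.

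The genuine gap is exactly the point you defer: verifying that no line of $Q$ contains two points of $S\cup B$ and that $\sharp(B\cap T_1)\le 2a-6$ for every $T_1\in\vert\mathcal{O}_Q(1,1)\vert$. Without these, neither Lemma~\ref{e4} nor Lemma~\ref{g4} applies and the bound $d_4(C)\ge 3a-14$ is not established; this is the heart of the proof, not a routine genericity check. Your proposed mechanism --- ``any $L$ forcing such concentration would be a pullback or restriction of a line bundle from a linear system of dimension $<4$'' --- is essentially the right idea for the ruling lines (the paper's Claim~2: a persistent pair of points of $B$ on a fiber of $v_2$ forces $\varphi$ to be non-birational and, since $v_2$ is not composed with a pencil by Lemma~\ref{f2}, to factor through $v_2$, giving $z\ge 4a$), but it does not work for a curve of type $(1,1)$: a single such conic is not a fiber of any pencil on $Q$, so concentration of $B$ on it yields no factorization of $\varphi$ through a map to $\mathbb{P}^1$. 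The paper's Step~($\diamond$), credited to the referee, uses a different mechanism: since $\dim\vert\mathcal{O}_Q(1,1)\vert<4$, infinitely many divisors of $\vert L\vert$ contain $B\cap T_1$, and the uniform position principle (full symmetric monodromy of a general hyperplane section of $\Gamma=\varphi(C)\subset\mathbb{P}^4$) forces the $\ge 2a-5$ points of $B\cap T_1$ to lie over very few points of $\Gamma$, whence $\deg(f)\ge 2a-5$ and $z\ge 4(2a-5)>3a-15$, a contradiction. You would need to supply this (or an equivalent) argument for your proof to close.
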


\begin{proof}

Lemma \ref{g2} gives that $Y$ is integral and nodal and that $S =\mbox{Sing}(Y)$. Hence $C$ has genus $a^2+am-2a-m+1-x$.

Since the line bundle $u^\ast (\mathcal {O}_Y(1,1))$ has degree $2a+m$, we have
$d_3(C)\le 2a+m$. Notice that $4(2a+m) < 3(3a-14)$ if $a \ge 4m +43$. Set $z:= d_4(C)$ and assume $z \le 3a-15$.

Fix $L\in \mbox{Pic}^z(C)$ evincing $d_4(C)$. The line bundle $L$ is spanned (\cite{lm}, Lemma 3.1 (b)). Fix a general $A\in \vert L\vert$. Set $B:= u(A)$. Since $L$ has no
base points and $A$ is general, $S\cap B =\emptyset$. Since $Q$ has
only finitely many lines intersecting $S$ and $A$ is general, we may assume
$B$ disjoint from these finitely many lines. Hence no line of $Q$ contains a point of $S$ and at least one point of $B$. 

\quad {\emph {Claim 1:}} $h^1(\mathcal {I}_{S\cup B}(a-2,a+m-2))>0$.

\quad {\emph {Proof of Claim 1:}} Since
$L$ has no base points, we have $h^0(C,\mathcal {O}_C(A\setminus \{O\})) =
h^0(C,\mathcal {O}_C(A)) -1$ for every $O\in A$. Hence
$h^0(C,\omega _C(-A+\{O\})) = h^0(C,\omega _C(-A))$ for every $O\in A$ (Riemann-Roch and Serre
duality). We have $\omega _Q\cong \mathcal {O}_Q(-2,-2)$. Hence
the adjunction formula gives $\omega _Y \cong \mathcal {O}_Y(a-2,a+m-2)$.
Since $h^i(\mathcal {O}_Q(-2,-2)) =0$, $i=0, 1$, the restriction
map $H^0(\mathcal {O}_Q(a-2,a+m-2)) \to H^0(Y,\omega _Y)$ is bijective.
Since $Y$ has only ordinary nodes as singularities, we have $H^0(C,\omega _C) \cong H^0(\mathcal {I}_S(a-2,a+m-2))$. Hence for
any $O\in A$ we have $h^0(\mathcal {I}_{S\cup (B\setminus \{u(O)\})}(a-2,a+m-2))
= h^0(C,\omega _C(-(A\setminus \{O\}))) = h^0(C,\omega _C(-A)) = h^0(\mathcal {I}_{S\cup B}(a-2,a+m-2))$. Hence $h^1(\mathcal {I}_{S\cup B}(a-2,a+m-2))>0$, concluding the proof of Claim 1.

Let $v_2: C \to \mathbb {P}^1$ (resp. $v_1: C \to \mathbb {P}^1$) denote the degree
$a$ (resp. degree $a+m$) morphism obtained composing $u$ with the pencil associated to $\vert \mathcal {O}_Q(0,1)\vert$ (resp. $\vert \mathcal {O}_Q(1,0)\vert$). Lemma \ref{f2} shows that none of these two pencils factors non-trivially. 

\quad {\emph {Claim 2:}} For a general $B$ no line of $Q$ contains two or more points of $B$.

\quad {\emph {Proof of Claim 2:}} Assume for instance that for a general $B$ there is a line $D_B\in \vert \mathcal {O}_Q(0,1)\vert$ such
that $\sharp (D_B\cap B)\ge 2$. Set $\Psi := \{(P,Q)\in C\times C: P\ne Q$ and $v_2(P) =v_2(Q)\}$. For any $D\in \vert \mathcal {O}_Q(0,1)\vert$ the scheme $D\cap Y$
is zero-dimensional. Since $\dim (\vert L\vert )=3 + \dim (\vert \mathcal {O}_Q(0,1)\vert )$,
there is a one-dimensional irreducible set $\Phi \subseteq \Psi$ such that for all $(P,Q)\in \Phi$ the set $\{P,Q\}$ is contained
in a $3$-dimensional family $F_{\{P,Q\}}$ of elements of $\vert L\vert$. Fix $(P,Q)\in \Phi$. Since $L$ has no base points, we have $h^0(L(-P)) =4$.
Hence the existence of the family $F_{\{P,Q\}}$ implies $h^0(L(-P-Q)) = h^0(L(-P))$. Hence $Q$ is a base point of $\vert L(-P)\vert$. The two projections
$C\times C\to C$ induce dominant maps $\Phi \to C$. Hence $P$ may be seen as a general point of $C$. Since $Q\ne P$, the morphism $\varphi : C \to \mathbb {P}^4$
associated to $\vert L\vert$ is not birational onto its image, i.e. $\varphi  = u _2\circ u_1$ with $\deg (u_1)\ge 2$, $u_1: C \to C'$ a morphism of degree $\ge 2$ with $C'$ a smooth curve
and $u_2: C' \to \varphi (C)\hookrightarrow \mathbb {P}^4$ birational onto its image. We have $z = \deg (u_1)\cdot \deg (\varphi (C)) \ge 4 \deg (u_1)$.
Since $u_1(P)=u_1(Q)$ for a general $(P,Q)\in \Phi $, a general fiber of $u_1$ intersects in at least two points a fiber of $v_2$. Since $v_2$ is not composed
with a pencil, $u_1$ factors through $v_2$. Hence $z \ge 4a$, a contradiction. Hence Claim 2 is true.

Since
$S$ is finite, there are only finitely many lines of $Q$ containing at least one point of $S$. Call
$\Gamma$ their union. Since $S$ is general, no such a line contains at least two points of $S$. Since $\vert L\vert$ has no base
points, and $\Gamma \cap Y$ is finite,  for general $B$ we may assume $B\cap \Gamma =\emptyset$. Hence Claim 2 implies that no line of
$Q$ contains at least two points of $S\cup B$. Claim 1 gives $h^1(Q,\mathcal {I}_{S\cup B}(a-2,a+m-2)) >0$. By (\ref{eqg00}) we have $x+z \le 3a-15 +a/3 +m \le
10 \lfloor a/3 \rfloor +20/3 +m -15 \le 10\lfloor a/3\rfloor +m-8$. To apply Lemma
\ref{e4} with $E=S\cup B$, $a=u$ and $v =a+m$ and get a contradiction it is sufficient to prove that $\sharp ((S\cup B)\cap T_1) \le 2a-3$ for all
$T_1\in \vert \mathcal {O}_Q(1,1)\vert$,
$\sharp ((S\cup B)\cap T_2) \le 3(a-2)+1$ for every $T_2\in \vert \mathcal {O}_Q(2,1)\vert$
and $\sharp ((S\cup B)\cap T_2) \le 3(a-2)-4$ for every $T_2\in \vert \mathcal {O}_Q(2,1)\vert$.
Fix $T_1\in \vert \mathcal {O}_Q(1,1)\vert$, $T_2\in \vert \mathcal {O}_Q(2,1)\vert$ and $T_3\in \vert \mathcal {O}_Q(1,2)\vert$.
Set $y_i:= \sharp (S\cap T_i)$ and $a_i:= \sharp (B\cap T_i)$. Assume either
$y_1+a_1 \ge 2a-2$ or $y_2+a_2 \ge 3a-4$ or $y_3+a_3\ge 3a-9$. Since $S$ is general, we have $y_1\le 3$, $y_2 \le 5$ and
$y_3\le 5$. Hence either $a_1\ge 2a-5$ or $a_2\ge 3a-9$ or $a_3 \ge 3a-14$.
Since $z\ge a_i$ for every $i$ such that $T_i$ exists and  $z\le 3a-15$, we get the existence of $T_1\in \vert \mathcal {O}_Q(1,1)\vert$
such that $\sharp (B\cap T_1) \ge 2a-5$. Since any line of $Q$ contains at most
one element of $B$, $T_1$ is irreducible.

\quad Step ($\diamond$) (Proof due to the referee; a simpler form would also prove Claim 2) Let $\varphi : C \to \mathbb {P}^4$ be the morphism defined by $\vert L\vert$. Set $\Gamma := \varphi (C)$. Let $C'$ be the normalization of $\Gamma$
and let $f: C \to C'$ be the covering induced by $\varphi$. Let $g^4_{z'}$, $z' = z/\deg (f)$, be the linear series on $C'$ induced by the inclusion $\Gamma \hookrightarrow \mathbb {P}^4$. Take $A'\in g^4_{z'}$ with $A = f^{-1}(A')$. Since $A\in \vert L\vert$ is general, $A'$ is general. The monodromy group of the general hyperplane section
of $\Gamma$ is the full symmetric group. Hence any $4$ points of a general hyperplane section of $\Gamma$ span a 3-dimensional projective space. Hence for any $E\subset A'$ with $\sharp (E) \ge 4$,  $A'$ is the only element of $g^4_{z'}$ containing $E$.
For any $T\in \vert \mathcal {O}_Q(1,1)\vert$ the set $T\cap Y$ is finite. Since $\dim (\vert \mathcal {O}_Q(1,1)\vert )<4$, there is an infinite family  $\mathcal {F} \subset \vert L\vert$ such that $u(D)$ contains $B\cap T_1$ for all $D\in \mathcal {F}$. Fix $D'\in g^4_{z'}$
such that $D:= f^{-1}(D') \in \mathcal {F}$ and $D' \ne A'$. Since $u(f^{-1}(D'\cap A'))$ contains $B\cap T_1$, there is $P\in A'$ such that $\sharp (u(f^{-1}(P)) \cap (B\cap T_1)) \ge (2a-5)/3$. 
Since $(2a-5)/3 \ge 3$, $T_1$ is the only element of $\vert \mathcal {O}_Q(1,1)\vert$ containing $u(f^{-1}(P))\cap (B\cap T_1)$. Moving $A'$ generally in
$g^4_{z'}(-P)$ we get that $D'$ and $B\cap T_1$ moves into a subscheme $B'$ of $u(f^{-1}(D'))$ contained in some $T'\in \vert \mathcal {O}_Q(1,1)\vert$. Since $P\in D'$
and $\sharp (u(f^{-1}(P)) \cap (B\cap T_1)) \ge 3$, we have $T'=T_1$. Hence $B\cap T_1$ does not move moving $A'$ in $g^4_{z'}(-P)$. Hence $\sharp (f^{-1}(P)) \ge 2a-5$, i.e.
$\deg (f) \ge 2a-5$. Since $z'\ge 4$, we get $z\ge 4(2a-5) $, a contradiction.\end{proof}

\begin{proposition}\label{h1}
Fix integers $x, \alpha, \gamma $ such that $\alpha \ge 3$,
$\gamma \ge 4$, $0 \le x \le (\gamma -1)^2$.
Set $a:= 3\alpha +\gamma$.
Fix a general $S\subset Q$ such that $\sharp (S)=x$ and a general
$Y\in \vert \mathcal {I}_{2S}(a,a)\vert$. $Y$ is integral, nodal and $\mbox{Sing}(Y)=S$.
Let $C$ be the normalization of $Y$. Then $d_4(C) \ge \min \{10\alpha +1,3a-14\}$. 
\end{proposition}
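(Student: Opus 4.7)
The plan is to mirror the proof of Proposition \ref{g5} with bidegree $(a,a)$ (so $m=0$) and with Lemma \ref{g4} replacing Lemma \ref{e4}. The parameters align perfectly: setting $u := a-2 = 3\alpha + (\gamma-2)$ identifies $\beta := \gamma - 2$ with the parameter $\beta$ of Lemma \ref{g4}, so $\gamma \ge 4$ gives $\beta \ge 2$, $\alpha \ge 3$ matches, and the hypothesis $x \le (\gamma-1)^2$ becomes exactly $x \le (\beta+1)^2$. First I would invoke Lemma \ref{g1}, whose numerical hypothesis $3x \le a^2$ is easily implied by $3(\gamma-1)^2 \le (3\alpha + \gamma)^2$ under $\alpha \ge 3$, $\gamma \ge 4$, to conclude that $Y$ is integral and nodal with $\mbox{Sing}(Y) = S$.

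Next, set $z := d_4(C)$ and argue by contradiction, assuming $z \le \min\{10\alpha, 3a-15\}$. Choose $L \in \mbox{Pic}^z(C)$ evincing $d_4(C)$; by \cite{lm}, Lemma 3.1(b), $L$ is spanned. Fix a general $A \in \vert L\vert$ and set $B := u(A)$; generality gives $S\cap B = \emptyset$ and $B$ disjoint from the finitely many lines of $Q$ through points of $S$. The cohomological heart of the argument is Claim 1 from the proof of Proposition \ref{g5}: the adjunction formula $\omega_Y \cong \mathcal{O}_Y(a-2,a-2)$, the vanishings $h^i(\mathcal{O}_Q(-2,-2)) = 0$ for $i=0,1$, and the identification $H^0(C,\omega_C) \cong H^0(\mathcal{I}_S(a-2,a-2))$ combine via Serre duality to force $h^1(\mathcal{I}_{S\cup B}(a-2,a-2)) > 0$.

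To contradict this by applying Lemma \ref{g4} with $(u,\alpha,\beta) = (a-2,\alpha,\gamma-2)$, I must verify: no line of $Q$ contains two points of $S\cup B$; $\sharp(B\cap T_1) \le 2a-6$ for every $T_1 \in \vert \mathcal{O}_Q(1,1)\vert$; and the bounds $\sharp(B\cap T_2) \le 3a-10$ on $\vert \mathcal{O}_Q(2,1)\vert$ and $\sharp(B\cap T_3) \le 3a-15$ on $\vert \mathcal{O}_Q(1,2)\vert$. The last two are immediate from $\sharp(B) = z \le 3a-15$. The remaining two are handled exactly as in Claim 2 and Step ($\diamond$) of the proof of Proposition \ref{g5}, with the key input supplied by Lemma \ref{f2} applied with $m=0$: neither of the two degree-$a$ pencils $v_1, v_2 : C \to \mathbb{P}^1$ induced by the rulings of $Q$ is composed with an involution. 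The main obstacle, as before, is Step ($\diamond$): if some $T_1 \in \vert \mathcal{O}_Q(1,1)\vert$ has $\sharp(B\cap T_1) \ge 2a-5$, the monodromy argument on the general hyperplane section of $\varphi(C) \subset \mathbb{P}^4$ forces the covering $f: C \to C'$ induced by $\vert L\vert$ to have degree $\ge 2a-5$, whence $z \ge 4(2a-5) > 3a-15$, a contradiction. Lemma \ref{g4} then yields $h^1(\mathcal{I}_{S\cup B}(a-2,a-2)) = 0$, contradicting the previous paragraph and proving the stated lower bound on $d_4(C)$.
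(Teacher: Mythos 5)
Your proposal is correct and follows the paper's own proof essentially verbatim: the same reduction by contradiction from $z\le \min\{10\alpha ,3a-15\}$ to Lemma \ref{g4} with $u=a-2$ and $\beta =\gamma -2$, with Claim 1 of Proposition \ref{g5} supplying $h^1(\mathcal {I}_{S\cup B}(a-2,a-2))>0$ and Claim 2 plus Step ($\diamond$) supplying the required bounds on $\sharp (B\cap T)$ for lines and for $T\in \vert \mathcal {O}_Q(1,1)\vert$. The one inaccuracy is the parenthetical claim that $3(\gamma -1)^2\le (3\alpha +\gamma )^2$ follows from $\alpha \ge 3$, $\gamma \ge 4$ (it fails for, e.g., $\alpha =3$, $\gamma =20$), but this concerns only the preliminary integrality/nodality assertion, which the paper likewise leaves implicit and which does hold in the parameter range where the proposition is actually used (Corollary \ref{h2}, where $x\le 2a-4$).
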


\begin{proof}
Set $z:= d_4(C)$ and assume $z\le 3a-15$ and $z\le 10\alpha$. Take the set-up of the proof of Proposition \ref{g5} with $m =0$. In particular we get
a finite set $B\subset Q\setminus S$ such that $\sharp (B) =z$,
$h^1(Q,\mathcal {I}_{S\cup B}(a-2,a-2)) >0$ and no line of $Q$ contains two points of $S\cup B$.
To get a contradiction we cannot apply Lemma \ref{e4} with $u=v = a-2$ and $E:= S\cup B$, because $x$ may be large. We need
to check that we may apply Lemma \ref{g4} with $u=a-2$ and $\beta =\gamma -2$, i.e. we need to check that no line of $Q$ contains two points of $S\cup B$,
$\sharp (B\cap T_1)\le 2a-6$ for
every $T_1\in \vert \mathcal {O}_Q(1,1)\vert$, $\sharp (B\cap T_2)\le 3a-10$ for every $T_2\in \vert \mathcal {O}_Q(2,1)\vert$ and $\sharp (B\cap T_3)\le 3a-14$ for every $T_3\in \vert \mathcal {O}_Q(1,2)\vert$. Since $z\le 3a-15$, we only need to test the conditions for the lines of $Q$ and that $\sharp (B\cap T_1)\le 2a-6$
for each $T_1\in \vert \mathcal {O}_Q(1,1)\vert$. Step ($\diamond$) of the proof of Proposition \ref{g5} proves the condition for $T_1\in \vert \mathcal {O}_Q(1,1)\vert$. We
may also copy the proof of Claim 2 of the proof of Proposition \ref{g5}, because the assumptions of Lemma \ref{g2} are satisfied.
\end{proof}

\begin{lemma}\label{h4}
Fix integers $a, x$ such that $a \ge 24$ and $0 \le x \le 2a-4$. Fix a general $S\subset Q$ such that $\sharp (S)=x$ and a general
$Y\in \vert \mathcal {I}_{2S}(a,a)\vert$. $Y$ is integral, nodal and $\mbox{Sing}(Y)=S$.
Let $C$ be the normalization of $Y$. Then $2a -5 \le d_3(C) \le 2a$.
\end{lemma}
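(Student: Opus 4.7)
The plan is to prove the two bounds separately. The integrality and nodality statements with $\mathrm{Sing}(Y) = S$ follow directly from Lemma \ref{g1}, since $3x \le 6a - 12 \le a^2$ for $a \ge 24$. For the upper bound $d_3(C) \le 2a$: the restriction map $H^0(\mathcal{O}_Q(1,1)) \to H^0(\mathcal{O}_Y(1,1))$ is injective (as $h^0(\mathcal{O}_Q(1-a, 1-a)) = 0$), and pulling back along $u$ yields a basepoint-free linear series of dimension $\ge 3$ and degree $2a$ on $C$.

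For the lower bound I argue by contradiction: suppose $z := d_3(C) \le 2a - 6$, and let $L$ evince $d_3(C)$, so that $\deg L = z$ and $h^0(L) = 4$. By \cite{lm}, Lemma 3.1 (b), $L$ is basepoint-free; choose a general $A \in |L|$ and set $B := u(A) \subset Q \setminus S$. The adjunction/residualization argument of Claim 1 of Proposition \ref{g5} carries through verbatim---using $\omega_Y \cong \mathcal{O}_Y(a-2, a-2)$, the isomorphism $H^0(C, \omega_C) \cong H^0(Q, \mathcal{I}_S(a-2, a-2))$, and the identity $h^0(L(-O)) = h^0(L) - 1$ for every $O \in A$---and produces
\[
h^1(Q, \mathcal{I}_{S \cup B}(a-2, a-2)) > 0.
\]

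My aim is to contradict this via Lemma \ref{g4} with $u := a - 2$. Take $\alpha := \lceil (a-3)/5 \rceil$ and $\beta := a - 2 - 3\alpha$. For $a \ge 24$ one verifies $\alpha \ge 5$, $\beta \ge 2$, $10\alpha \ge z$, and $(\beta + 1)^2 \ge x$. The three bounds $\sharp(B \cap T_i) \le 2u - 2,\ 3u - 4,\ 3u - 9$ for $T_i \in |\mathcal{O}_Q(1,1)|, |\mathcal{O}_Q(2,1)|, |\mathcal{O}_Q(1,2)|$ all follow automatically from $\sharp B \le 2a - 6$ once $a \ge 24$. By Remark \ref{g5} no line of $Q$ contains two points of $S$; and since $|L|$ is spanned and only finitely many lines of $Q$ meet $S$, for general $A$ the set $B$ avoids those lines, so no line of $Q$ contains both a point of $S$ and a point of $B$.

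The remaining and principal obstacle is to rule out the possibility that some line of $Q$ contains two points of $B$. For this I would adapt the Step $(\diamond)$ argument of Proposition \ref{g5} to the present setting $\dim |L| = 3$. Lemma \ref{f2} (whose hypothesis $3x \le (a-1)^2$ is satisfied for $a \ge 24$ and $x \le 2a - 4$) ensures that neither pencil $v_i : C \to \mathbb{P}^1$ induced by $|\mathcal{O}_Q(1,0)|$ or $|\mathcal{O}_Q(0,1)|$ is composed with a nontrivial cover. If a generic $A \in |L|$ had two points lying on a common $D \in |\mathcal{O}_Q(0,1)|$, then a dimension count on $\{(P, Q) \in C^2 : P \ne Q,\ v_2(P) = v_2(Q)\}$ would force the morphism $\varphi : C \to \mathbb{P}^3$ defined by $|L|$ to factor through $v_2$; Lemma \ref{f2} then forces $\deg \varphi = \deg v_2 \cdot \deg \varphi(C) \ge 3a$, contradicting $z \le 2a - 6$, and the symmetric argument excludes $|\mathcal{O}_Q(1,0)|$. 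Lemma \ref{g4} then yields $h^1(Q, \mathcal{I}_{S \cup B}(a - 2, a - 2)) = 0$, the desired contradiction. Hence $d_3(C) \ge 2a - 5$.
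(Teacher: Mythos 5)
Your proof is correct and follows essentially the same route as the paper's: the same upper bound via the pull-back of $\mathcal {O}_Q(1,1)$, the same reduction (via the arguments of Claims 1 and 2 of Proposition \ref{g5}) to a set $B$ with $h^1(\mathcal {I}_{S\cup B}(a-2,a-2))>0$ and no line meeting $S\cup B$ twice, and the same contradiction via Lemma \ref{g4} with $u=a-2$; your explicit choice $\alpha =\lceil (a-3)/5\rceil$ simply replaces the paper's $\gamma :=\max \{2,-1+\sqrt{x}\}$, $\alpha :=\lfloor (a-2-\gamma )/3\rfloor$, and both verify the same four hypotheses for $a\ge 24$. Only a labelling slip: the argument you invoke for excluding a line through two points of $B$ is Claim 2 of Proposition \ref{g5} (the dimension count on $\Psi$ adapted to $\dim \vert L\vert =3$, giving $z\ge 3a$), not Step ($\diamond$), which is the monodromy argument for curves of type $(1,1)$ and is indeed not needed here since $\sharp (B)\le 2a-6=2u-2$ makes that condition automatic.
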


\begin{proof}
Lemma \ref{g1} gives that $Y$ is integral, nodal and smooth outside $S$. The pull-back of the line bundle $\mathcal {O}_Q(1,1)$
gives $d_3(C) \le 2a$. Assume $z:= d_3(C) \le 2a-6$ and fix $L\in \mbox{Pic}^z(C)$ evincing $d_3(C)$. As in Claims 1 and 2 of the
proof of Proposition \ref{g5} we get a set $B\subset Q\setminus S$ such that $\sharp (B) =z$,
no line of $Q$ contains $2$ of the points of $S\cup B$ and $h^1(\mathcal {I}_{S\cup B}(a-2,a-2))>0$. Since
$z \le 2a-6$ and $a\ge 16$, we have $z+5 \le 3a-15$. Hence $\sharp (T\cap (S\cup B)) \le 3a-15$ for
every $T\in \vert \mathcal {O}_Q(2,1)\vert$ and every $T\in \vert \mathcal {O}_Q(1,2)\vert$. Since $z\le 2a-6$, we
have $\sharp (B\cap T) \le 2(a-2)-2$ for every $T\in \vert \mathcal {O}_Q(1,1)\vert$. Set $\gamma := \max \{2,-1+\sqrt{x}\}$, $\alpha := \lfloor (a-2-\gamma )/3\rfloor$
and $\beta := a-2 -3\alpha$. We have $\beta \ge \gamma \ge 2$ and $x \le (\gamma +1)^2 \le (\beta +1)^2$. To apply Lemma \ref{g4} (and hence to get
 $h^1(\mathcal {I}_{S\cup B}(a-2,a-2))=0$, i.e. a contradiction) it is sufficient to prove that $z\le 10\alpha$. This is true if $x \le 8$, because in this
 case $\gamma =2$. Hence we may assume $\gamma = -1+\sqrt{x}$. Hence $\alpha \ge (a-4 -\sqrt{2a})/3$. We have $10(a-4-\sqrt{2a})/3 \ge 2a-6$
 if and only if $4a-22 \ge 10\sqrt{2a}$. Hence it is sufficient to assume $a\ge 24$.\end{proof}

\begin{remark}\label{h3}
Take $C$ as in Proposition \ref{h1}. Set $M:= u^\ast (\mathcal {O}_Y(2,1))\in \mbox{Pic}^{3a}(C)$. Since $h^0(Q,\mathcal {O}_Q(2,1)) =6$ and $Y$ is contained in no element of $\vert \mathcal {O}_Q(2,1)\vert$, we
have $h^0(M)\ge 6$. Fix $P\in Y$ such that $P\in \mbox{Sing}(Y)$ if $x>0$.
Let $F\subset C$ be the scheme-theoretic pull-back of the scheme $P$. We have $\deg (F) \ge 1+\min \{1,x\}$. Since $h^0(Q,\mathcal {I}_E(2,1)) =4$,
we have $h^0(M(-F)) \ge 5$.
Hence $d_4(C) \le 3a - 1 -\min \{1,x\}$.
\end{remark}

\begin{corollary}\label{h2}
Fix integers $a, x$ such that $a\ge 204$ and $0 \le x \le 2a-4$. Fix a general $S\subset Q$ such that $\sharp (S)=x$ and a general
$Y\in \vert \mathcal {I}_{2S}(a,a)\vert$. $Y$ is integral, nodal and $\mbox{Sing}(Y)=S$.
Let $C$ be the normalization of $Y$. Then $2a-5 \le d_3(C) \le 2a$ and $3a-15
\le d_4(C) \le 3a -1-\min \{1,x\}$.
\end{corollary}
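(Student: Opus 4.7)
The plan is to assemble the corollary from the three preceding results: the bounds on $d_3(C)$ come verbatim from Lemma \ref{h4} (whose hypothesis $a \ge 24$ holds since $a \ge 204$), and the upper bound $d_4(C) \le 3a - 1 - \min\{1,x\}$ is exactly the content of Remark \ref{h3} applied to the same $C$. So the only real work is the lower bound $d_4(C) \ge 3a - 15$, for which I would invoke Proposition \ref{h1}.

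Proposition \ref{h1} gives $d_4(C) \ge \min\{10\alpha + 1, 3a - 14\}$ for any decomposition $a = 3\alpha + \gamma$ satisfying $\alpha \ge 3$, $\gamma \ge 4$, and $x \le (\gamma - 1)^2$. The strategy is to pick $\gamma$ as small as possible (to keep $\alpha$ large) while still accommodating the potentially large value $x$. Concretely, I would set $\gamma_0 := \max\{4, \lceil 1 + \sqrt{x}\, \rceil\}$, then choose $\gamma \in \{\gamma_0, \gamma_0 + 1, \gamma_0 + 2\}$ so that $a - \gamma \equiv 0 \pmod 3$, and define $\alpha := (a - \gamma)/3$. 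By construction $\gamma \ge 4$ and $(\gamma - 1)^2 \ge x$.

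The main obstacle is then a purely numerical check. Since $x \le 2a - 4$ we have $\gamma \le 3 + \lceil \sqrt{2a-4}\, \rceil$, which for $a \ge 204$ forces $\alpha \ge 3$ comfortably. To conclude $d_4(C) \ge 3a - 15$ it suffices that $10\alpha + 1 \ge 3a - 15$, equivalently $a + 48 \ge 10\gamma$, i.e., $10\lceil \sqrt{2a-4}\, \rceil \le a + 18$. Using $\lceil \sqrt{2a-4}\, \rceil \le \sqrt{2a} + 1$, it suffices that $10\sqrt{2a} \le a + 8$, which squares to the quadratic inequality $a^2 - 184a + 64 \ge 0$. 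This holds for all $a \ge 184$, and in particular for all $a \ge 204$.

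With all hypotheses of Proposition \ref{h1} verified, we get $d_4(C) \ge \min\{10\alpha + 1, 3a - 14\} \ge 3a - 15$, which combined with Lemma \ref{h4} and Remark \ref{h3} yields the claimed inequalities.
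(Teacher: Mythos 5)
Your proposal is correct and takes essentially the same route as the paper: the $d_3$ bounds are quoted from Lemma \ref{h4}, the upper bound on $d_4$ from Remark \ref{h3}, and the lower bound $d_4(C)\ge 3a-15$ comes from choosing the splitting parameter of order $\sqrt{x}\le\sqrt{2a-4}$ so that $10\alpha$ exceeds $3a-16$, verified by the same kind of inequality $10\sqrt{2a}\le a+O(1)$ valid for $a\ge 204$. The only (cosmetic) difference is that you invoke Proposition \ref{h1} as a black box with $\gamma\approx 1+\sqrt{x}$, whereas the paper inlines that argument, applying Lemma \ref{g4} directly with $u=a-2$, $\delta=-1+\lceil\sqrt{2a-4}\rceil$ and $\beta=a-2-3\alpha$.
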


\begin{proof}
Since $3x \le 6a-12 \le a^2$, $Y$ is integral, nodal and $\mbox{Sing}(Y) = S$
(Lemma \ref{g1}). Lemma \ref{h4} gives $2a-5 \le d_3(C) \le 2a$. Remark \ref{h3} gives $d_4(C) \le 3a-1-\min \{x,1\}$. Assume $z:= d_4(C) \le 3a-16$.  Take $B$ as in the proofs of Proposition \ref{g5} and \ref{h1}. We have $h^1(\mathcal {I}_{S\cup B}(a-2,a-2)) >0$. Set $\delta := -1
+\lceil  \sqrt{2a-4}\rceil$ and $\alpha := \lfloor (a-2 -\delta )/3\rfloor$. Notice
that $x \le 2a-4 \le (\delta +1)^2$ and that $\alpha \ge (a-5-\sqrt{2a})/3$. Since $a\ge 204$, we have $a-2 \ge 10\sqrt{2a}$,
i.e. $10(a-5-\sqrt{2a})/3 \ge 3a-16$. Hence $10\alpha \ge 3a-16$. By assumption we have
$d_3(C) \le 3(a-2)-9$. Claim 2 and Step ($\diamond$) of the proof of Proposition \ref{g5}
show that we may apply Lemma \ref{g4} with the integers $u:=a-2$ and $\beta:= a-2-3\alpha$ (notice that $\beta \ge \delta$) and get $h^1(\mathcal {I}_{S\cup B}(a-2,a-2)) =0$, a contradiction.
\end{proof}

\vspace{0.3cm}
\qquad {\emph {Proof of Theorem \ref{i1}.}} For all integers $a, x$ set $g_a:= a^2-2a+1$ and $g_{a,x} = g_a-x$. If $a>0$, then $g_a = p_a(Y)$  for any $Y\in\vert \mathcal {O}_Q(a,a)\vert$. Hence
if $Y$ is a nodal curve of type $(a,a)$ with exactly $x$ nodes, then $g_{a,x}$ is the genus of the normalization of $Y$. Now assume $a\ge 2$. We have $g_a-g_{a-1} = a^2-2a+1-a^2+2a-1+2a-2-1 =2a-3$. Hence the set $\{g_{a,x}\}_{0 \le x \le 2a-4}$ contains every integer
between $g_{a-1}+1$ and $g_a$. We take the set-up of the proof of Corollary
\ref{h2}. Fix an integer $g  \ge 40805$. Let $a$ be the only integer
such that $g_{a-1} < g \le g_a$. Since $g_{203} =40804$, we have $a\ge 204$. We have $g = g_a -x$ with $0\le x\le 2a-4$. Apply Corollary \ref{h2}.\qed

Of course, the lower bound $g\ge 40805$ is not sharp.

\vspace{0.3cm}

\qquad {\emph {Proof of Theorem \ref{i2}.}} For any $g< 40805$ we take as $C_g$ an arbitrary smooth
curve of genus $g$. Fix an integer $g \ge 40805$ and call $a$ the minimal positive integer such that
$g \le a^2-2a+1$. Set $x:= a^2-2a+1-g$. Since $g> (a-1)^2-2(a-1)+1$, we have $x \le 2a-4$.
Fix a general $S\subset Q$ such that $\sharp (S) =x$. Take as $C_g$ the normalization of
a general $Y\in \vert \mathcal {I}_{2S}(a,a)\vert$. Corollary \ref{h2} gives $2a-6 \le d_3(C) \le 2a$
and
$3a -15\le d_4(C) \le 3a -1 -\min \{1,x\}$. We have $ g_{a-1} < g \le g_a$. Hence the limits
are as in the statement of Theorem \ref{i2}.\qed

\providecommand{\bysame}{\leavevmode\hbox to3em{\hrulefill}\thinspace}


\begin{thebibliography}{99}

\bibitem{ac} E. Arbarello and M. Cornalba, Footnotes to a paper of Beniamino Segre, \textit{Math. Ann.} 256 (1981),
341--362.



\bibitem{bd} K. Baur and J. Draisma, Secant dimensions of low-dimensional homogeneous varieties, \textit{Adv. Geom.} 10 (2010), 1--29.




\bibitem{lm} H. Lange and G. Martens, On the gonality sequence of an algebraic curve, \textit{Manuscripta
Math.} 137 (2012), 457--473.

\bibitem{ln1} H. Lange and P. E. Newstead, Clifford indices for vector bundles on curves, in: Schmitt, A.
(ed.) Affine Flag Manifolds and Principal Bundles, Trends in Mathematics, Birkh\"{a}user, Basel (2010), pp. 165--202.

\bibitem{ln2} H. Lange and P. E. Newstead, Lower bounds for Clifford indices in rank three, \textit{Math. Proc. Cambridge Philos. Soc.} 150 (2011), no. 1, 23--33.




\bibitem{ms} E. C. Mistretta and L. Stoppino, Linear series on curves: stability and Clifford index,
arXiv:111.0304v1.

\end{thebibliography}
\end{document}